\newtheorem{assumption}{Assumption}
\newcommand{\cM}{{\cal M}}
\newcommand{\cT}{{\cal T}}
\newcommand{\hw}{{\hat w}}
\newcommand{\by}{{\bar y}}
\newcommand{\bz}{{\bar z}}
\newcommand{\bx}{{\bar x}}
\newcommand{\bw}{{\bar w}}
\begin{document}

\title{HPR-LP: An implementation of an HPR method for solving linear programming}



\author{Kaihuang Chen \and Defeng Sun \and Yancheng Yuan \and Guojun Zhang \and Xinyuan Zhao      
}


\institute{Kaihuang Chen \at
              Department of Applied Mathematics, The Hong Kong Polytechnic University, Hung Hom, Hong Kong
              \\
              \email{kaihuang.chen@connect.polyu.hk}           
           \and
               Defeng Sun (\Letter) \at
              Department of Applied Mathematics,  The Hong Kong Polytechnic University, Hung Hom, Hong Kong
              \\
              \email{defeng.sun@polyu.edu.hk}
        \and      Yancheng Yuan \at
              Department of Data Science and Artificial Intelligence,  The Hong Kong Polytechnic University, Hung Hom, Hong Kong \\
              \email{yancheng.yuan@polyu.edu.hk}
        \and
            Guojun Zhang \at
              Department of Applied Mathematics, The Hong Kong Polytechnic University, Hung Hom, Hong Kong \\
              \email{guojun.zhang@connect.polyu.hk}
         \and
            Xinyuan Zhao \at
              Department of Mathematics, Beijing University of Technology, Beijing, P.R. China \\
              \email{xyzhao@bjut.edu.cn}
}

\maketitle

\begin{abstract}
In this paper, we introduce an HPR-LP solver, an implementation of a Halpern Peaceman-Rachford (HPR) method with semi-proximal terms for solving linear programming (LP). The HPR method enjoys the iteration complexity of \(O(1/k)\) in terms of the Karush-Kuhn-Tucker residual and the objective error. Based on the complexity results, we design an adaptive strategy of restart and penalty parameter update to improve the efficiency and robustness of the HPR method. We conduct extensive numerical experiments on different LP benchmark datasets using NVIDIA A100-SXM4-80GB GPU in different stopping tolerances. Our solver's Julia version achieves a \textbf{2.39x} to \textbf{5.70x} speedup measured by SGM10 on benchmark datasets with presolve (\textbf{2.03x} to \textbf{4.06x} without presolve) over the award-winning solver PDLP with the tolerance of $10^{-8}$.

\keywords{Linear programming \and Halpern Peaceman-Rachford \and Alternating direction method of multipliers \and Acceleration \and Complexity analysis}
\subclass{	90C05 \and 	90C06 \and 90C25 \and 65Y20}
\end{abstract}

\section{Introduction}
\label{sec:1}
In this paper, we introduce how to implement a Halpern Peaceman-Rachford (HPR) method with semi-proximal terms \cite{sun2024accelerating} to solve the following linear programming (LP) problems:
\begin{equation}\label{model:primalLP}
\begin{aligned}
\min _{x \in \mathbb{R}^n} &\quad  \langle c, x \rangle \\
\text { s.t. } & A_1 x = b_1 \\
& A_2 x \geq b_2 \\
& x \in C,
\end{aligned}
\end{equation}
where \(A_1 \in \mathbb{R}^{m_1 \times n}\), \(A_2 \in \mathbb{R}^{m_2 \times n}\), \(b_1 \in \mathbb{R}^{m_1}\), \(b_2 \in \mathbb{R}^{m_2}\), \(c \in \mathbb{R}^{n}\), and \(C := \{x \in \mathbb{R}^n \mid l \leq x \leq u\}\) with given vectors \(l \in (\mathbb{R} \cup \{-\infty\})^n\) and \(u \in (\mathbb{R} \cup \{+\infty\})^n\). Let \(A = [A_1; A_2] \in \mathbb{R}^{m \times n}\) with \(m = m_1 + m_2\), and \(b = [b_1; b_2] \in \mathbb{R}^m\).  In this paper, we assume that $A$ is a non-zero matrix.  Then, the dual of problem \eqref{model:primalLP} is given by
\begin{equation}\label{model:dualLP}
\begin{aligned}
\min _{y \in \mathbb{R}^m, z \in \mathbb{R}^n} & -\langle b, y \rangle +  \delta_{D}(y)+
  \delta_{C}^{*}(-z) \\
\text { s.t. } & A^{*} y + z = c,
\end{aligned}
\end{equation}
where $\delta_{D}(\cdot)$ is the indicator function over \(D:= \{ y = (y_1, y_2) \in \mathbb{R}^{m_1} \times \mathbb{R}^{m_2}_{+}\}\) and $\delta_{C}^{*}(\cdot)$ is the conjugate of $\delta_{C}(\cdot)$.

Traditionally, the commercial solvers for LP \cite{gurobi,cplex2009v12} are based on the simplex methods and interior point methods. In recent years, first-order methods (FOMs) have gained increasing attention for solving large-scale LP problems due to their low iteration cost and ease of parallelization \cite{applegate2021practical,applegate2023faster,basu2020eclipse,deng2024enhanced,lin2021admm,lu2024restarted,o2021operator,o2016conic,stellato2020osqp}. Notably, Applegate et al. \cite{applegate2021practical,applegate2023faster} developed an award-winning solver PDLP\footnote{The authors of \cite{applegate2021practical,applegate2023faster} were awarded the Beale–Orchard-Hays Prize for Excellence in Computational Mathematical Programming at the 25th International Symposium on Mathematical Programming (\href{https://ismp2024.gerad.ca/}{https://ismp2024.gerad.ca/}), July 21-26, 2024, Montréal, Canada.}, which employed the primal-dual hybrid gradient (PDHG) method \cite{zhu2008efficient} as its base algorithm, equipped with several effective implementation techniques, including the use of the ergodic iterate as a restart point\footnote{In the GPU implementation (e.g., \cite{lu2023cupdlp} and \cite{lu2023cupdlp_c}), the restart point is selected based on the weighted KKT residual, utilizing either the last iterate or the ergodic iterate. For theoretical analysis, the ergodic iterate is used as the restart point \cite{applegate2023faster}.}, an update rule for the penalty parameter \(\sigma\), and a line search strategy. The GPU implementation of PDLP (cuPDLP.jl \cite{lu2023cupdlp} and cuPDLP-c \cite{lu2023cupdlp_c}) has shown some advantages over commercial LP solvers like Gurobi \cite{gurobi} and COPT \cite{ge2022cardinal} for solving large-scale LP problems. It is worth noting that the ergodic sequence of the semi-proximal ADMM (sPADMM) \cite{fazel2013hankel}, including PDHG \cite{chambolle2011first,esser2010general}, achieves an $O(1/k)$ iteration complexity with respect to the objective error and the feasibility violation \cite{cui2016convergence}, where $k$ is the iteration number. To the best of our knowledge, it is still unknown whether the ergodic sequence of sPADMM can achieve an $O(1/k)$ iteration complexity in terms of the Karush-Kuhn-Tucker (KKT) residual for LP.\footnote{Shen and Pan \cite{shen2016weighted} extended Monteiro and Svaiter's ergodic \(O(1/k)\) iteration complexity result with respect to the $\varepsilon$-subdifferential residual \cite{monteiro2013iteration} from ADMM to sPADMM, which can yield an \(O(1/\sqrt{k})\) iteration complexity for the KKT residual of LP problems.}

Recently, some progress in complexity results has been achieved in accelerating the preconditioned (semi-proximal) PR \cite{sun2024accelerating,yang2025accelerated,zhang2022efficient} using the Halpern iteration \cite{halpern1967fixed,lieder2021convergence,sabach2017first}. In particular, Zhang et al. \cite{zhang2022efficient} applied the Halpern iteration to the PR splitting method \cite{eckstein1992douglas,lions1979splitting}, developing the HPR method without proximal terms.\footnote{The HPR method without proximal terms is equivalent to Kim's accelerated proximal point method (PPM) applied to the Douglas-Rachford method in view of the relationship between Halpern's iteration and Kim's accelerated PPM \cite{kim2021accelerated,ryu2022large}.} This HPR method achieves an iteration complexity of \(O(1/k)\) for the KKT residual and the objective error.
Subsequently, Yang et al. \cite{yang2025accelerated} reformulated the proximal PR method as a proximal point method (PPM) with a positive definite preconditioner, developing an HPR method with proximal terms via the Halpern iteration that obtains the \(O(1/k)\) iteration complexity for the weighted fixed-point residual. Finally, Sun et al. \cite{sun2024accelerating} reformulated the semi-proximal PR method into a degenerate PPM (dPPM) with a positive semidefinite preconditioner \cite{bredies2022degenerate} and applied the Halpern iteration to this dPPM, resulting in an HPR method with semi-proximal terms that enjoys the desired \(O(1/k)\) iteration complexity for the KKT residual and the objective error.\footnote{The Halpern PDHG for LP proposed in \cite{lu2024restarted} corresponds to a special case of the HPR method in \cite{sun2024accelerating} with one proximal term to be strongly convex and $\rho=1$. {Inspired by our work, Lu and Yang \cite{lu2024restarted}, in their revised version, extended their approach by introducing the reflected restarted Halpern PDHG ($r^2$HPDHG), which adopts $\rho=2$ within the framework of the HPR method in \cite{sun2024accelerating}. By incorporating similar algorithmic enhancements as cuPDLP \cite{lu2023cupdlp}, $r^2$HPDHG achieves a 1.27x to 1.33x speedup over cuPDLP in their numerical experiments.}} Compared to existing algorithms for solving LP, the HPR method \cite{sun2024accelerating,zhang2022efficient} offers a key theoretical advantage: an \(O(1/k)\) iteration complexity in terms of the KKT residual, which motivates us to explore the potential of HPR for solving large-scale LP problems.

The main purpose of this paper is to introduce a solver called HPR-LP for solving LP problems. The main features of our solver are highlighted below.
\begin{enumerate}
\item Based on the iteration complexity of $O(1/k)$ in terms of the KKT residual, in our HPR-LP solver we integrate a restart strategy and an update rule of penalty parameter $\sigma$  into the HPR method with semi-proximal terms for solving large-scale LP problems.
\item We test the numerical performance of the HPR-LP solver on different LP benchmark datasets using NVIDIA A100-SXM4-80GB GPU in different stopping tolerances. The Julia version of our solver achieves a \textbf{2.39x} to \textbf{5.70x} speedup measured by SGM10 on benchmark datasets with presolve (\textbf{2.03x} to \textbf{4.06x} without presolve) over the award-winning solver PDLP with the tolerance of $10^{-8}$. Moreover, for the same accuracy, HPR-LP successfully solves more problems than cuPDLP.jl does.
\end{enumerate}

The remaining parts of this paper are organized as follows. In Section \ref{sec:2}, we briefly introduce the base algorithm, an HPR method with semi-proximal terms, for solving LP. Then, in Section \ref{sec:3}, we discuss the implementation of HPR-LP, which incorporates a restart strategy and an update rule for the penalty parameter. Section \ref{sec:4} provides extensive numerical results on different LP benchmark sets. Finally, we conclude the paper in Section \ref{sec:5}.
\paragraph{Notation.}
Let \(\mathbb{R}^n\) be the \(n\)-dimensional real Euclidean space equipped with an inner product \(\langle \cdot, \cdot \rangle\) and its induced norm \(\|\cdot\|\). We denote the nonnegative orthant of \(\mathbb{R}^n\) as \(\mathbb{R}^n_{+}\). For a matrix \(A \in \mathbb{R}^{m \times n}\), we denote its transpose by \(A^{*}\) and its spectral norm by \(\|A\| := \sqrt{\lambda_{1}\left(AA^{*}\right)}\), where \(\lambda_{1}\left(AA^{*}\right)\) represents the largest eigenvalue of the symmetric matrix \(AA^{*}\). Additionally, for any self-adjoint positive semidefinite linear operator $\mathcal{M}: \mathbb{R}^n \to \mathbb{R}^{n}$, we define the semi-norm $\|x\|_{\cM}:= \sqrt{\langle x,\cM x\rangle}$ for any $x\in \mathbb{R}^{n}$. For any convex function \(f: \mathbb{R}^n \to (-\infty, +\infty]\), we denote its effective domain as \(\operatorname{dom}(f) := \{x \in \mathbb{R}^n \mid f(x) < +\infty\}\), its subdifferential as $\partial f(\cdot)$, its conjugate as \(f^*(z) := \sup_{x \in \mathbb{R}^n} \{\langle x, z \rangle - f(x)\}, \, z \in \mathbb{R}^n\), and its proximal mapping as \(\operatorname{Prox}_f(x) := \arg \min_{z \in \mathbb{R}^n} \left\{ f(z) + \frac{1}{2} \|z - x\|^2 \right\}, \, x \in \mathbb{R}^n\). Let \(C \subseteq \mathbb{R}^n\) be a convex set. Denote the indicator function over \(C\) by \(\delta_C(\cdot)\), i.e., for any \(x \in \mathbb{R}^n\), \(\delta_C(x) = 0\) if \(x \in C\), and \(\delta_C(x) = +\infty\) if \(x \notin C\).
We write the distance of \(x \in \mathbb{R}^n\) to \(C\) as \(\operatorname{dist}(x, C) := \inf_{z \in C} \|z - x\|\). For a given closed convex set \(C \subseteq \mathbb{R}^n\) and a given point \(x \in \mathbb{R}^n\), the Euclidean projection of \(x\) onto \(C\) is denoted by \(\Pi_C(x) := \arg \min \{\|x - z\| \mid z \in C\}\). Moreover, for any \(x \in C\), we use \(\mathcal{N}_C(x)\) to denote the normal cone of \(C\) at \(x\).

\section{Preliminaries}
\label{sec:2}
In this section, we introduce an HPR method with semi-proximal terms for solving LP problems. According to \cite[Corollary 28.3.1]{rockafellar1970convex}, we know that $({y}^*, {z}^*) \in$ $\mathbb{R}^{m} \times \mathbb{R}^{n}$ is an optimal solution to problem \eqref{model:dualLP} if and only if there exists ${x}^* \in \mathbb{R}^{n}$ such that $({y}^*, {z}^*,{x}^*)$ satisfies the following KKT system:
	\begin{equation}\label{eq:KKT}
		0\in Ax^*-b+ \mathcal{N}_{D}(y^*), \quad  0 \in z^{*} +\mathcal{N}_{C}(x^{*}) , \quad  A^{*}{y}^*+{z}^*-c=0.
	\end{equation}
 For any $(y,z,x)\in \mathbb{R}^{m} \times \mathbb{R}^{n} \times \mathbb{R}^{n}$, the augmented Lagrangian function associated with the dual problem \eqref{model:dualLP} is defined as
$$
L_{\sigma}(y,z;x):= -\langle b, y \rangle+  \delta_{D}(y) + \delta_{C}^{*}(-z)+\langle x, A^{*}y+z-c \rangle +\frac{\sigma}{2}\|A^{*}y+z-c\|^2,
$$
where $\sigma>0$ is a given penalty parameter. For notational convenience, let $w:=(y,z,x)\in \mathbb{R}^{m} \times \mathbb{R}^{n} \times \mathbb{R}^{n}$. Then, an HPR method with semi-proximal terms proposed in \cite{sun2024accelerating} for solving problems \eqref{model:primalLP} and \eqref{model:dualLP} is presented in Algorithm \ref{alg:sp-HPR}.
\begin{algorithm}[ht!]
		\caption{An HPR method  with semi-proximal terms for the problem \eqref{model:dualLP}}
		\label{alg:sp-HPR}
		\begin{algorithmic}[1]
			\State {\textbf{Input:} Set the penalty parameter \(\sigma > 0\). Let \(\mathcal{T}_1: \mathbb{R}^m \to \mathbb{R}^m\) be a self-adjoint positive semidefinite linear operator such that \(\mathcal{T}_1 + \sigma AA^{*}\) is positive definite. {Denote $w=(y,z,x)$ and $\bw=(\by,\bz,\bx)$.}
            Choose an initial point \(w^0 = (y^0, z^0, x^0) \in D \times \mathbb{R}^n \times \mathbb{R}^n\).\vspace{3pt}}
               \For{$k=0,1,...,$ \vspace{3pt}}
			\State {Step 1. $\displaystyle \bz^{k+1}=\underset{z \in \mathbb{R}^{n}}{\arg \min }\left\{L_{\sigma}\left(y^k, z ; x^k\right)\right\}$;}
			\State{Step 2. $ \displaystyle \bx^{k+1}={x}^k+\sigma (A^{*}{y}^{k}+\bz^{k+1}-c) $;}
			\State {Step 3. $\displaystyle \by^{k+1}=\underset{y \in \mathbb{R}^{m}}{\arg \min }\left\{L_{\sigma}\left(y, \bz^{k+1} ; \bx^{k+1}\right)+\frac{\sigma}{2}\|y-y^{k}\|_{\mathcal{T}_1}^2\right \}$;}
			\State {Step 4. $\displaystyle \hw^{k+1}= 2\bw^{k+1}-{w}^{k} $;}

			\State {Step 5.  $\displaystyle w^{k+1}=\frac{1}{k+2} w^{0}+\frac{k+1}{k+2}\hw^{k+1}$; \vspace{3pt}}
               \EndFor \vspace{3pt}
             \State{\textbf{Output:} Iteration sequence $\{\bw^k\}$.}
		\end{algorithmic}
	\end{algorithm}

Algorithm \ref{alg:sp-HPR} corresponds to the accelerated preconditioned (semi-proximal) ADMM (pADMM)  introduced in \cite{sun2024accelerating} with \(\alpha = 2\), where Step 5 is the Halpern iteration with a stepsize of \(\frac{1}{k+2}\) \cite{halpern1967fixed,lieder2021convergence}. We refer to it as an HPR method with semi-proximal terms because, for the convex optimization problem \eqref{primal}, the Halpern accelerating pADMM (without proximal terms) \cite{sun2024accelerating} is equivalent to the HPR method without proximal terms \cite{zhang2022efficient}, whose proof can be found in Appendix \ref{appendix-A}. Now, to discuss the global convergence of the HPR method  with semi-proximal terms presented in Algorithm \ref{alg:sp-HPR}, we make the following assumption:
\begin{assumption}\label{ass: CQ}
There exists a vector $({y}^*, {z}^*,{x}^*)\in \mathbb{R}^{m}\times \mathbb{R}^{n}\times \mathbb{R}^{n}$ satisfying the KKT system \eqref{eq:KKT}.
\end{assumption}
Under Assumption \ref{ass: CQ}, solving problems
\eqref{model:primalLP} and \eqref{model:dualLP} is equivalent to finding a $w^{*} \in \mathbb{R}^{m} \times \mathbb{R}^{n} \times \mathbb{R}^{n}$ such that $0\in \cT w^{*}$, where the maximal monotone operator $\cT$ is defined by
	\begin{equation}\label{def:T}
		\cT w=\left(\begin{array}{c}
			-b+\mathcal{N}_{D}(y) +A {x}  \\
			-\partial \delta_{C}^{*}(-z) +  x  \\
			c-A^{*}{y}-{z} \\
		\end{array}     \right),  \quad  \forall w=(y,z,x)\in \mathbb{R}^{m} \times \mathbb{R}^{n} \times \mathbb{R}^{n}.
	\end{equation}
 Consider the following self-adjoint linear operator $\cM: \mathbb{R}^{m}\times\mathbb{R}^{n}\times\mathbb{R}^{n}   \rightarrow \mathbb{R}^{m}\times\mathbb{R}^{n}\times\mathbb{R}^{n}$,
	\begin{equation}\label{def:M}
		\cM=\left[\begin{array}{ccc}
			\sigma AA^{*}+\sigma \cT_1  &\quad  0  \quad &\quad  A \\
			0 &\quad 0 \quad  & \quad 0 \\
			A^{*} &\quad  0 \quad  & \quad \frac{1}{\sigma} I_{n}
		\end{array}\right],
\end{equation}
where $I_{n}$ denotes the identity matrix in $\mathbb{R}^{n\times n}$. We can establish the equivalence between the HPR method with semi-proximal terms and the accelerated dPPM \cite{sun2024accelerating} in the following proposition.
\begin{proposition}\label{prop:equ-acc-PADMM-acc-dPPM}
 Consider {the operators} $\cT$ defined in \eqref{def:T} and $\cM$ defined in \eqref{def:M}. Then the sequence $\left\{w^k\right\}$ generated by the HPR method  with semi-proximal terms in Algorithm \ref{alg:sp-HPR} coincides with the sequence $\left\{w^k\right\}$ generated by the following accelerated dPPM
$$
\left\{\begin{array}{ll}
    &\displaystyle \bw^{k+1}=  (\cM+\cT)^{-1}\cM w^{k},  \\[6pt]
    &\displaystyle \hw^{k+1}= 2\bw^{k+1}-{w}^{k}, \\[4pt]
    &\displaystyle w^{k+1}=\frac{1}{k+2} w^{0}+\frac{k+1}{k+2}\hw^{k+1},
\end{array}\right.
$$
with the same initial point $ w^0 \in D \times \mathbb{R}^n \times \mathbb{R}^n$.
\end{proposition}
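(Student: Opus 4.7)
The plan is to note first that Steps 4 and 5 of Algorithm \ref{alg:sp-HPR} are literally identical to the last two updates in the accelerated dPPM in the proposition, so by an obvious induction it suffices to show that the triple $\bar{w}^{k+1}=(\bar{y}^{k+1},\bar{z}^{k+1},\bar{x}^{k+1})$ produced by Steps 1--3 equals $(\mathcal{M}+\mathcal{T})^{-1}\mathcal{M}w^k$. This equality is equivalent to the inclusion
\[
\mathcal{M}(w^{k}-\bar{w}^{k+1})\in \mathcal{T}\bar{w}^{k+1},
\]
which, thanks to the block structure of $\mathcal{M}$ and $\mathcal{T}$, splits into three componentwise relations that I would verify one by one.

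I would handle the easy components first. The second row of $\mathcal{M}$ is zero, so the second inclusion reads $0\in -\partial\delta_{C}^{*}(-\bar{z}^{k+1})+\bar{x}^{k+1}$; writing down the first-order condition for the $z$-subproblem in Step 1 yields $0\in -\partial\delta_{C}^{*}(-\bar{z}^{k+1})+x^{k}+\sigma(A^{*}y^{k}+\bar{z}^{k+1}-c)$, and substituting $\bar{x}^{k+1}=x^{k}+\sigma(A^{*}y^{k}+\bar{z}^{k+1}-c)$ from Step 2 gives exactly the required inclusion. The third inclusion reads $A^{*}(y^{k}-\bar{y}^{k+1})+\tfrac{1}{\sigma}(x^{k}-\bar{x}^{k+1})=c-A^{*}\bar{y}^{k+1}-\bar{z}^{k+1}$; after cancellation of $A^{*}\bar{y}^{k+1}$, this collapses to the identity $\bar{x}^{k+1}-x^{k}=\sigma(A^{*}y^{k}+\bar{z}^{k+1}-c)$, which is again Step 2.

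The main obstacle, and the one requiring some algebra, is the first component. The first-order condition for Step 3 (using $\partial\delta_{D}=\mathcal{N}_{D}$) gives
\[
0\in -b+\mathcal{N}_{D}(\bar{y}^{k+1})+A\bar{x}^{k+1}+\sigma A(A^{*}\bar{y}^{k+1}+\bar{z}^{k+1}-c)+\sigma\mathcal{T}_{1}(\bar{y}^{k+1}-y^{k}).
\]
Here the key trick is to use Step 2 in the form $\sigma(A^{*}y^{k}+\bar{z}^{k+1}-c)=\bar{x}^{k+1}-x^{k}$ to rewrite
\[
\sigma A(A^{*}\bar{y}^{k+1}+\bar{z}^{k+1}-c)=\sigma AA^{*}(\bar{y}^{k+1}-y^{k})+A(\bar{x}^{k+1}-x^{k}).
\]
Plugging this back and rearranging isolates $[\mathcal{T}\bar{w}^{k+1}]_{1}=-b+\mathcal{N}_{D}(\bar{y}^{k+1})+A\bar{x}^{k+1}$ on one side and $(\sigma AA^{*}+\sigma\mathcal{T}_{1})(y^{k}-\bar{y}^{k+1})+A(x^{k}-\bar{x}^{k+1})=[\mathcal{M}(w^{k}-\bar{w}^{k+1})]_{1}$ on the other, which is exactly the desired first-row inclusion.

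Once the three componentwise inclusions are in place, the well-definedness of $(\mathcal{M}+\mathcal{T})^{-1}\mathcal{M}w^{k}$ on the range of $\mathcal{M}$ can be invoked from the dPPM framework of \cite{sun2024accelerating,bredies2022degenerate} under the standing assumption that $\mathcal{T}_{1}+\sigma AA^{*}$ is positive definite, so that $\bar{w}^{k+1}$ is uniquely characterized by the inclusion and the two formulations must agree. Combining this with the identical Steps 4--5 closes the induction and yields the proposition.
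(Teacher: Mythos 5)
Your proposal is correct: the componentwise verification that the optimality conditions of Steps 1--3, combined with the Step 2 identity $\bar x^{k+1}-x^k=\sigma(A^*y^k+\bar z^{k+1}-c)$, reproduce the inclusion $\cM(w^k-\bw^{k+1})\in\cT\bw^{k+1}$ is exactly the argument behind the result, which the paper itself does not spell out but delegates to \cite[Proposition 3.2]{sun2024accelerating}. Since that reference proceeds by the same resolvent-identification computation, your proof takes essentially the same approach, merely supplying the details the paper omits.
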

\begin{proof}
    This result can be derived similarly from \cite[Proposition 3.2]{sun2024accelerating}. \qed
\end{proof}

By utilizing the equivalence in Proposition \ref{prop:equ-acc-PADMM-acc-dPPM}, we can establish the global convergence of the HPR method with semi-proximal terms, as presented in the following proposition.
\begin{proposition}[Corollary 3.5 in \cite{sun2024accelerating}]
Suppose that Assumption \ref{ass: CQ} holds. Then the sequence \(\{\bw^k\} = \{(\by^k, \bz^k, \bx^k)\}\) generated by the HPR method with semi-proximal terms in Algorithm \ref{alg:sp-HPR} converges to the point \(w^{*} = (y^*, z^*, x^*)\), where \((y^*, z^*)\) solves problem \eqref{model:dualLP} and \(x^*\) solves problem \eqref{model:primalLP}.
\end{proposition}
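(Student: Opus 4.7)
The plan is to reduce the convergence claim, via the equivalence given in Proposition \ref{prop:equ-acc-PADMM-acc-dPPM}, to the abstract convergence theory for the Halpern-accelerated degenerate proximal point method (dPPM) developed in \cite{sun2024accelerating}. The abstract theorem takes as input a maximal monotone operator $\cT$ with nonempty zero set together with a self-adjoint positive semidefinite preconditioner $\cM$ for which the resolvent-type map $(\cM+\cT)^{-1}\cM$ is single-valued, and outputs strong convergence of the iterates under the Halpern anchoring rule. My task is therefore to verify these hypotheses for the specific operators \eqref{def:T} and \eqref{def:M}, and then transfer the conclusion back to Algorithm \ref{alg:sp-HPR}.

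First I would identify the zeros of $\cT$ with KKT triples: reading off the three block components of \eqref{def:T} shows that $0 \in \cT w^*$ is literally the system \eqref{eq:KKT}, so Assumption \ref{ass: CQ} guarantees a nonempty zero set. Maximal monotonicity of $\cT$ is standard because it is the KKT operator of a convex program: each diagonal block is the subdifferential of a proper closed convex function and the off-diagonal coupling is skew-adjoint, so the sum is maximal monotone.

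Next I would verify the structural properties of $\cM$ in \eqref{def:M}. Self-adjointness is immediate from the block form. A short block computation rewrites the quadratic form $\langle w, \cM w\rangle$ as a sum of squares involving $\sigma\|A^{*}y + x/\sigma\|^{2}$ and $\sigma\|y\|_{\cT_1}^{2}$, which yields positive semidefiniteness; single-valuedness of $(\cM+\cT)^{-1}\cM$ then follows from the assumption that $\sigma A A^{*} + \sigma \cT_1$ is positive definite, which is precisely the defining feature that allows the degenerate-preconditioner framework of \cite{bredies2022degenerate} to apply despite the vanishing $(z,z)$-block of $\cM$.

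With these ingredients in place, Corollary 3.5 of \cite{sun2024accelerating} applied to the accelerated dPPM in Proposition \ref{prop:equ-acc-PADMM-acc-dPPM} yields convergence of $\{\bw^k\}$ to some $w^*=(y^*,z^*,x^*)$ with $0 \in \cT w^*$, which by the first step is a KKT triple and hence $(y^*,z^*)$ solves \eqref{model:dualLP} while $x^*$ solves \eqref{model:primalLP}. The main obstacle in a self-contained write-up, as opposed to a citation, would be the third ingredient: because $\cM$ is rank-deficient, the classical PPM nonexpansiveness argument does not directly deliver strong convergence of the iterates themselves, and one must invoke the degenerate PPM machinery to upgrade the natural $\cM$-seminorm contraction inherited from the Halpern iteration to genuine convergence of $\{\bw^k\}$ in the ambient Euclidean norm.
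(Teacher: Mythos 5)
Your proposal is correct and follows essentially the same route as the paper, which establishes this result by combining the equivalence in Proposition \ref{prop:equ-acc-PADMM-acc-dPPM} with the convergence theory for the Halpern-accelerated degenerate PPM and simply cites Corollary 3.5 of \cite{sun2024accelerating}. The additional detail you supply (identifying zeros of $\cT$ with KKT triples, maximal monotonicity via the skew-adjoint coupling, the sum-of-squares decomposition of $\langle w,\cM w\rangle$, and the role of the positive definiteness of $\sigma AA^{*}+\sigma\cT_1$) is exactly the hypothesis verification that the cited corollary requires.
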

Moreover, the equivalence in Proposition \ref{prop:equ-acc-PADMM-acc-dPPM}, together with the iteration complexity of the Halpern iteration \cite{lieder2021convergence}, yields the following iteration complexity for the HPR method with semi-proximal terms:
\begin{proposition}[Proposition 2.9 in \cite{sun2024accelerating}]  \label{prop:complexity-acc-pADMM-M}
Suppose that Assumption \ref{ass: CQ} holds. Then the sequences \(\{w^k\}\) and \(\{\hat{w}^k\}\) generated by the HPR method  with semi-proximal terms in Algorithm \ref{alg:sp-HPR} satisfy
\begin{equation}
\|w^{k}-\hat{w}^{k+1}\|_{\cM} \leq \frac{2\left\|w^0-w^*\right\|_{\cM}}{k+1},\quad  \forall k \geq 0 \text{ and } w^* \in \mathcal{T}^{-1}(0).
\end{equation}
\end{proposition}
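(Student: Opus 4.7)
The plan is to recognize the three-step dPPM recursion furnished by Proposition \ref{prop:equ-acc-PADMM-acc-dPPM} as a Halpern iteration applied to a reflection-type operator, and then invoke Lieder's tight complexity bound for Halpern's method. Concretely, define the (resolvent-like) operator $\mathcal{R} := (\mathcal{M}+\mathcal{T})^{-1}\mathcal{M}$ and its reflection $\mathcal{N} := 2\mathcal{R} - \mathcal{I}$ on $\mathbb{R}^m\times\mathbb{R}^n\times\mathbb{R}^n$. With this notation, Proposition \ref{prop:equ-acc-PADMM-acc-dPPM} tells us that $\bar w^{k+1} = \mathcal{R} w^k$, $\hat w^{k+1} = \mathcal{N} w^k$, and Step 5 in Algorithm \ref{alg:sp-HPR} becomes the canonical Halpern update
\begin{equation*}
w^{k+1} \;=\; \tfrac{1}{k+2}\,w^{0} \;+\; \tfrac{k+1}{k+2}\,\mathcal{N} w^{k}.
\end{equation*}

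Next I would verify the two structural facts that make Lieder's bound applicable in the $\mathcal{M}$-seminorm. First, since $\mathcal{T}$ is maximal monotone and $\mathcal{M}$ is self-adjoint positive semidefinite with $\mathcal{M}+\sigma\mathcal{T}_1$ positive definite in the $y$-block (so that $\mathcal{R}$ is well-defined in the sense of the degenerate PPM framework of Bredies--Chenchene--Lorenz--Naldi), one has $\|\mathcal{N} u - \mathcal{N} v\|_{\mathcal{M}} \leq \|u-v\|_{\mathcal{M}}$; this is the standard firmly nonexpansive-in-$\mathcal{M}$ property of $\mathcal{R}$ rephrased for the reflection. Second, $w^\ast$ is a fixed point of $\mathcal{N}$ (equivalently of $\mathcal{R}$) if and only if $0 \in \mathcal{T} w^\ast$, so Assumption \ref{ass: CQ} supplies a fixed point.

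Once nonexpansiveness in the seminorm and the existence of a fixed point are in hand, the complexity estimate reduces to Lieder's bound for Halpern's iteration, which states that for any nonexpansive map $\mathcal{N}$ with fixed point $w^\ast$,
\begin{equation*}
\|w^k - \mathcal{N} w^k\|_{\mathcal{M}} \;\leq\; \frac{2\,\|w^0 - w^\ast\|_{\mathcal{M}}}{k+1},\qquad \forall k \geq 0.
\end{equation*}
Since $\mathcal{N} w^k = \hat w^{k+1}$ by construction, this is precisely the asserted inequality.

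The main obstacle is the degeneracy of $\mathcal{M}$: Lieder's proof is customarily stated in a Hilbert space norm, while here $\|\cdot\|_{\mathcal{M}}$ is only a seminorm because the $z$-block of $\mathcal{M}$ vanishes. I would handle this by observing that Lieder's argument is purely algebraic, relying only on the nonexpansiveness inequality and a telescoping/averaging identity, so it carries over verbatim to any inner product induced by a positive semidefinite operator provided the nonexpansiveness of $\mathcal{N}$ is established in that seminorm; this is exactly the passage used in \cite{sun2024accelerating} (and codified by Bredies et al.\ for degenerate preconditioners), so the result follows without extra work beyond citing those two ingredients.
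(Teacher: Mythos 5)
Your proposal is correct and follows essentially the same route as the paper, which obtains this bound by combining the dPPM equivalence of Proposition \ref{prop:equ-acc-PADMM-acc-dPPM} with Lieder's Halpern complexity estimate, applied in the $\cM$-seminorm within the degenerate-preconditioner framework of \cite{sun2024accelerating} (and Bredies et al.). The two ingredients you isolate --- $\cM$-nonexpansiveness of the reflection $2(\cM+\cT)^{-1}\cM - \mathcal{I}$ and the identification of fixed points with zeros of $\cT$ under Assumption \ref{ass: CQ} --- are exactly what the cited Proposition 2.9 rests on.
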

To further analyze the complexity of the HPR method with semi-proximal terms in terms of the KKT residual and the objective error, we consider the residual mapping associated with the KKT system \eqref{eq:KKT}, as introduced in \cite{han2018linear}:
        \begin{equation}\label{def:KKT_residual}
		\mathcal{R}(w) = \left(
		\begin{array}{c}
			y - \Pi_{D}(y - Ax +b)\\
			x - \Pi_{C}(x -z)\\
			c - A^{*}y   -z
		\end{array}
		\right), \quad \forall w=(y,z,x) \in \mathbb{R}^{m}\times \mathbb{R}^{n}\times \mathbb{R}^{n}.
	\end{equation}
 Additionally, let $\{(\by^k,\bz^k)\}$ be the sequence generated by Algorithm \ref{alg:sp-HPR}. We define the objective error as follows:
$$
h(\by^{k+1}, \bz^{k+1}):=-\langle b, \by^{k+1} \rangle + \delta_{C}^{*}(-\bz^{k+1}) +\langle b, y^{*} \rangle - \delta_{C}^{*}(-z^*), \quad \forall k \geq 0,
$$
where $\left(y^*, z^*\right)$ is the limit point of the sequence $\{(\by^k, \bz^k)\}$. Based on the iteration complexity presented in Proposition \ref{prop:complexity-acc-pADMM-M}, we can derive the complexity result of the HPR method with semi-proximal terms in terms of the KKT residual and the objective error, as stated in the following theorem.
 \begin{theorem}[Theorem 3.7 in \cite{sun2024accelerating}]\label{Th:complexity-acc-pADMM}
			Suppose that Assumption \ref{ass: CQ} holds. Let $\{(\by^{k},\bz^{k},\bx^{k})\}$ be the sequence generated by the HPR method with semi-proximal terms in Algorithm \ref{alg:sp-HPR}, and let $w^*=(y^*,z^*,x^*)$ be the limit point of the sequence $\{(\by^{k},\bz^{k},\bx^{k})\}$ and $R_0=\|w^{0}-w^{*}\|_{\cM}$. Then for all $k \geq 0$, we have the following iteration complexity bounds:
				\begin{equation}
					\label{eq: complexity-bound-KKT}
					\displaystyle\|\mathcal{R}(\bar w^{k+1})\| \leq \left( \frac{\sigma (\|A\|+\|\sqrt{\cT_1}\|)+1}{\sqrt{\sigma}} \right) \frac{R_0}{(k+1)}
				\end{equation}
				and
				\begin{equation}\label{eq: complexity-bound-obj}
					\begin{array}{ll}
						\displaystyle \left(\frac{-1}{\sqrt{\sigma}}\|x^*\|\right) \frac{ R_0}{(k+1)}\leq h(\by^{k+1},\bz^{k+1})  \leq  \left(3R_0  + \frac{1}{\sqrt{\sigma}}\|x^*\|\right) \
						\frac{R_0}{(k+1)}.
					\end{array}
				\end{equation}
\end{theorem}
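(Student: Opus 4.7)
The plan is to derive both bounds from the single Halpern estimate in Proposition \ref{prop:complexity-acc-pADMM-M}, leveraging the dPPM equivalence of Proposition \ref{prop:equ-acc-PADMM-acc-dPPM}. Set $\Delta y^k := y^k - \by^{k+1}$ and $\Delta x^k := x^k - \bx^{k+1}$. Unpacking the inclusion $\cM(w^k - \bw^{k+1}) \in \cT\bw^{k+1}$ produces three relations: the $y$-row yields $r_1 := (\sigma AA^* + \sigma\cT_1)\Delta y^k + A\Delta x^k$ with $r_1 + b - A\bx^{k+1} \in \mathcal{N}_D(\by^{k+1})$; the $z$-row yields $\bx^{k+1} \in \partial \delta_C^*(-\bz^{k+1})$, equivalent to $-\bz^{k+1} \in \mathcal{N}_C(\bx^{k+1})$; and the $x$-row yields $r_2 := A^*\Delta y^k + \tfrac{1}{\sigma}\Delta x^k = c - A^*\by^{k+1} - \bz^{k+1}$. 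Since $w^k - \hw^{k+1} = 2(w^k - \bw^{k+1})$, Proposition \ref{prop:complexity-acc-pADMM-M} supplies $\|w^k - \bw^{k+1}\|_{\cM} \le R_0/(k+1)$.

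For the KKT residual bound I inspect each entry of $\cR(\bw^{k+1})$. The second entry vanishes by the projection characterization of the $z$-row inclusion; the third equals $r_2$ exactly; and the first is at most $\|r_1\|$ by nonexpansiveness of $\Pi_D$ applied to the identity $\by^{k+1} = \Pi_D(\by^{k+1} + r_1 + b - A\bx^{k+1})$. The crucial ingredient is the square-sum identity
\begin{equation*}
\|w^k - \bw^{k+1}\|_{\cM}^2 \;=\; \bigl\|\sqrt{\sigma}\,A^*\Delta y^k + \tfrac{1}{\sqrt{\sigma}}\Delta x^k\bigr\|^2 + \sigma\,\|\Delta y^k\|_{\cT_1}^2,
\end{equation*}
which immediately gives $\sqrt{\sigma}\|r_2\| \le \|w^k-\bw^{k+1}\|_{\cM}$ and $\sqrt{\sigma}\|\Delta y^k\|_{\cT_1} \le \|w^k-\bw^{k+1}\|_{\cM}$. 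Writing $r_1 = \sigma A r_2 + \sigma\cT_1\Delta y^k$ and using $\|\cT_1 v\| \le \|\sqrt{\cT_1}\|\cdot\|v\|_{\cT_1}$ then yields $\|r_1\| \le \sqrt{\sigma}(\|A\|+\|\sqrt{\cT_1}\|)\|w^k-\bw^{k+1}\|_{\cM}$. Combining the two nonzero-entry bounds produces \eqref{eq: complexity-bound-KKT}.

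For the objective error, the lower bound follows from the saddle-point property of $(y^*, z^*, x^*)$: since $(y^*, z^*)$ minimizes $L(\cdot,\cdot;x^*)$ without the augmentation, $h(\by^{k+1},\bz^{k+1}) \ge -\langle x^*, A^*\by^{k+1}+\bz^{k+1}-c\rangle = \langle x^*, r_2\rangle$, which is controlled via $\|x^*\|\cdot\|r_2\|$. The upper bound is obtained by testing the $y$-row normal-cone inclusion against $y^*$ and the $z$-row subgradient inequality against $-z^*$, then summing and using $A^*y^* + z^* = c$ to collapse all cross-terms into $h(\by^{k+1},\bz^{k+1}) \le \langle \bx^{k+1}, r_2\rangle + \langle \by^{k+1} - y^*, r_1\rangle$. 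Substituting $r_1 = \sigma A r_2 + \sigma\cT_1\Delta y^k$ and splitting $\bx^{k+1} = x^* + (\bx^{k+1}-x^*)$ reorganizes the right-hand side into $\langle x^*, r_2\rangle$ plus two bilinear terms, each of which factors through the same square-sum decomposition and is bounded by $\|\bw^{k+1}-w^*\|_{\cM}\cdot\|w^k-\bw^{k+1}\|_{\cM}$. The main obstacle is the auxiliary estimate $\|\bw^{k+1}-w^*\|_{\cM} \le \alpha R_0$ for a small absolute constant $\alpha$; this is obtained from the nonexpansiveness of the dPPM reflection $T = 2(\cM+\cT)^{-1}\cM - I$ in the $\cM$-seminorm, a standard Halpern induction yielding $\|w^k - w^*\|_{\cM} \le R_0$, and the averaging $\bw^{k+1} = \tfrac{1}{2}(w^k+\hw^{k+1})$ to pass to the midpoint. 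Substituting back with $\|w^k - \bw^{k+1}\|_{\cM} \le R_0/(k+1)$ yields \eqref{eq: complexity-bound-obj}.
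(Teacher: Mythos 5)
Your proof is correct and follows exactly the route the paper indicates (the paper itself only cites Theorem 3.7 of the reference, noting that the result is derived from Proposition \ref{prop:complexity-acc-pADMM-M}): you unpack the dPPM inclusion \(\cM(w^k-\bw^{k+1})\in\cT\bw^{k+1}\), use the block square-sum structure of \(\|\cdot\|_{\cM}\) to control \(r_1\), \(r_2\), and the bilinear terms, and close the objective-error bound with the Halpern/nonexpansiveness estimate \(\|\bw^{k+1}-w^*\|_{\cM}\le R_0\le 3R_0\). All steps check out, and your constants reproduce \eqref{eq: complexity-bound-KKT} and \eqref{eq: complexity-bound-obj} exactly.
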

{
\begin{remark} \label{rem:complexity}
There are several complexity results related to the KKT residual of ADMM-type methods for solving LP in the literature. In the ergodic sense, Monteiro and Svaiter \cite{monteiro2013iteration} established an ergodic \(O(1/k)\) iteration complexity for ADMM with a unit dual step size in terms of the \(\varepsilon\)-subdifferential, which, as mentioned in the introduction,  implies an ergodic \(O(1/\sqrt{k})\) iteration complexity for the KKT residual. Recently, building on the ergodic $O(1/k)$ iteration complexity of the PDHG established by Chambolle and Pock \cite{chambolle2011first,chambolle2016ergodic}, Applegate et al. \cite{applegate2023faster} derived an ergodic $O(1/k)$ iteration complexity of the PDHG for solving LP, measured by primal feasibility violation, dual feasibility violation, and the primal-dual gap, all of which can be inferred from the KKT residual. In the nonergodic sense, Cui et al. \cite{cui2016convergence} showed that a majorized ADMM with semi-proximal terms achieves a nonergodic $O(1/\sqrt{k})$ iteration complexity for the KKT residual. Compared to these existing methods, the HPR method attains a much improved   $O(1/k)$ iteration complexity in terms of the KKT residual as in \eqref{eq: complexity-bound-KKT}, providing stronger theoretical guarantees that, in turn, support practical advantages in solving large-scale LP problems efficiently. For more complexity results, please refer to \cite{sun2024accelerating} and the references therein.
\end{remark}
}

\section{A Halpern Peaceman-Rachford {solver} for solving LP}\label{sec:3}
In this section, we present an HPR-LP solver for solving large-scale LP problems, as outlined in Algorithm \ref{alg:HPR_LP}. In the HPR-LP solver, we integrate a restart strategy and adaptive updates of the penalty parameter \(\sigma\) into the HPR method with semi-proximal terms.

\begin{algorithm}[ht!]
		\caption{HPR-LP: A Halpern Peaceman-Rachford method for the problem \eqref{model:dualLP}}
		\label{alg:HPR_LP}
		\begin{algorithmic}[1]
			\State {\textbf{Input:} Let \(\mathcal{T}_1: \mathbb{R}^m \to \mathbb{R}^m\) be a self-adjoint positive semidefinite linear operator such that \(\mathcal{T}_1 +  AA^{*}\) is positive definite. {Denote $w=(y,z,x)$ and $\bw=(\by,\bz,\bx)$.} Choose an initial point \(w^{0,0} = (y^{0,0}, z^{0,0}, x^{0,0}) \in D \times \mathbb{R}^n \times \mathbb{R}^n\).\vspace{3pt} }
                \State{\textbf{Initialization}: Set the outer loop counter \(r = 0\), the total loop counter \(k = 0\), and the initial penalty parameter \(\sigma_{0} > 0\). \vspace{3pt}}
                \Repeat  \vspace{3pt}
                       \State{{initialize the inner loop: set inner loop counter $t=0$;} \vspace{3pt}}
                       \Repeat \vspace{3pt}
			\State {$\displaystyle \bz^{r,t+1}=\underset{z \in \mathbb{R}^{n}}{\arg \min }\left\{L_{\sigma_r}\left(y^{r,t}, z ; x^{r,t}\right)\right\}$; \vspace{3pt} }
			\State{$\displaystyle \bx^{r,t+1}={x}^{r,t}+{\sigma_r} (A^{*}{y}^{r,t}+\bz^{r,t+1}-c) $; \vspace{3pt}}
			\State {$\displaystyle \by^{r,t+1}=\underset{y \in \mathbb{R}^{m}}{\arg \min }\left\{L_{\sigma_r}\left(y, \bz^{r,t+1} ; \bx^{r,t+1}\right)+\frac{\sigma_{r}}{2}\|y-y^{r,t}\|_{\mathcal{T}_1}^2\right \};$ \vspace{3pt}	}
			\State {$\displaystyle \hw^{r,t+1}= 2\bw^{r,t+1}-{w}^{r,t} $; \vspace{3pt} }
			\State {$\displaystyle w^{r,t+1}=\frac{1}{t+2} w^{r,0}+\frac{t+1}{t+2}\hw^{r,t+1}$; \vspace{3pt}}
   			\State {$t=t+1$, $k=k+1$;}
                 \Until one of the restart criteria holds or termination criteria hold \vspace{3pt}
                 \State{\textbf{restart the inner loop:}
                 $\tau_{r}=t, w^{r+1,0}=\bw^{r,\tau_{r}}$ \vspace{3pt}},
                  \State{$\displaystyle \sigma_{r+1} ={\rm \textbf{SigmaUpdate} \vspace{3pt}}(\bw^{r,\tau_{r}},w^{r,0},\cT_1,A)$, $r=r+1;$ \vspace{3pt}}
            \Until   termination criteria hold \vspace{3pt}
             \State{\textbf{Output:} $\{\bw^{r,t}\}$.}
		\end{algorithmic}
	\end{algorithm}

\subsection{Restart criteria}
From Theorem \ref{Th:complexity-acc-pADMM}, we know that the base algorithm, the HPR method with semi-proximal terms, achieves the iteration complexity of \(O(1/k)\) with respect to the KKT residual and the objective error. This result is derived from the complexity analysis in Proposition \ref{prop:complexity-acc-pADMM-M}, which offers a tighter bound for designing restart criteria. Thus, we define the following merit function:
\[
R_{r,t} := \|w^{r,t} - w^*\|_{\cM}, \quad \forall r \geq 0, \ t \geq 0,
\]
where \( w^* \) is an arbitrary solution to the KKT system \eqref{eq:KKT}. Note that \(R_{r,0}\) represents the upper bound (disregarding the factor 2) in the complexity results in Proposition \ref{prop:complexity-acc-pADMM-M} at the $r$-th outer loop. A natural strategy is to restart the inner loop when \( R_{r,t} \) has sufficiently decreased compared to \( R_{r,0} \), i.e., \( R_{r,t} \leq \alpha_1 R_{r,0} \), where \( \alpha_1 \in (0,1) \). Unfortunately, in practice, if \( \alpha_1 \) is too small, the algorithm is likely to fail to achieve this sufficient reduction. Hence, we also consider the length of the inner loop and the oscillation of the merit function \( R_{r,t}\). These ideas have been implemented in PDLP with different merit functions \cite{applegate2021practical,lu2023cupdlp,lu2023cupdlp_c}.\footnote{In \cite{applegate2021practical}, Applegate et al. used a normalized duality gap as the merit function, whereas Lu et al. selected a weighted KKT residual as the merit function in \cite{lu2023cupdlp,lu2023cupdlp_c}.} In addition,  since \( w^* \) is unknown, we approximate \( R_{r,t} \) using the following expression, inspired by Proposition \ref{prop:complexity-acc-pADMM-M}:
\[
\widetilde{R}_{r,t} = \|w^{r,t} - \hat{w}^{r,t+1}\|_{\cM}.
\]
Consequently, the restart criteria in HPR-LP are defined as follows:
\begin{enumerate}
    \item  {Sufficient decay of $\widetilde{R}_{r,t+1}$:}
    \begin{equation}\label{eq:restart_1}
            \widetilde{R}_{r,t+1}\leq \alpha_{1} \widetilde{R}_{r,0};
    \end{equation}
    \item  {Necessary decay + no local progress of  $\widetilde{R}_{r,t+1}$:}
    \begin{equation}\label{eq:restart_2}    \widetilde{R}_{r,t+1}\leq \alpha_{2} \widetilde{R}_{r,0}\quad  \text{ and }\quad   \widetilde{R}_{r,t+1} > \widetilde{R}_{r,t};
    \end{equation}
    \item  {Long inner loop:}
     \begin{equation}\label{eq:restart_3} t\geq \alpha_3k,
     \end{equation}
\end{enumerate}
where \( \alpha_1 \in (0, \alpha_2) \), \( \alpha_2 \in (0,1) \), and \( \alpha_3 \in (0,1) \). Once any of the three restart criteria is met, we restart the inner loop for the \((r+1)\)-th iteration, set \( w^{r+1,0} = \bar{w}^{r,\tau_r} \), and update \( \sigma_{r+1} \).

\subsection{Update rule for $\sigma$}
The update rule for $\sigma$ in HPR-LP is also derived from the complexity results of the HPR method with semi-proximal terms in Proposition \ref{prop:complexity-acc-pADMM-M}. Specifically, we update \(\sigma_{r+1}\) at the \((r+1)\)-th restart for any \(r \geq 0\) by solving the following optimization problem:
\begin{equation}\label{eq:sigma-0}
\sigma_{r+1} := \arg \min_{\sigma} \left\| w^{r+1,0} - w^* \right\|_{\mathcal{M}}^2,
\end{equation}
where \(\left\| w^{r+1,0} - w^* \right\|_{\mathcal{M}}\) represents the upper bound of the complexity results in Proposition \ref{prop:complexity-acc-pADMM-M} at the \((r+1)\)-th outer loop. A smaller upper bound is expected to lead to a smaller \(\|w^{r+1,t} - \hat{w}^{r+1,t+1}\|_{\cM}\) for any \(t \geq 0\), which further results in a smaller KKT residual \(\|\mathcal{R}(\bw^{r+1,t+1})\|\). Substituting the definition of \(\mathcal{M}\) from \eqref{def:M} into \eqref{eq:sigma-0}, we derive
\begin{equation}\label{eq:sigma}
 \begin{aligned}
\sigma_{r+1} &= \arg \min_{\sigma} \left\| w^{r+1,0} - w^* \right\|_{\mathcal{M}}^2 \\
&= \arg \min_{\sigma} \left( \sigma \| y^{r+1,0} - y^* \|^2_{\mathcal{T}_1} +
\sigma^{-1}
\| x^{r+1,0} - x^* + \sigma A^*(y^{r+1,0} - y^*) \|^2 \right) \\
&= \sqrt{ \frac{\| x^{r+1,0} - x^* \|^2}{\| y^{r+1,0} - y^* \|_{\mathcal{T}_1}^2 + \| A^*(y^{r+1,0} - y^*) \|^2} }.
\end{aligned}
\end{equation}
Since computing \(\| x^{r+1,0} - x^* \|\) and \(\| y^{r+1,0} - y^* \|_{\mathcal{T}_1}^2 + \| A^*(y^{r+1,0} - y^*) \|^2\) are not implementable, we approximate these terms in HPR-LP using
\begin{equation}\label{eq:Dx&Dy}
\displaystyle \Delta_x := \| \bx^{r,\tau_r} - x^{r,0} \| \; \;  \text{and} \; \; \Delta_y := \sqrt{\| \by^{r,\tau_r} - y^{r,0} \|_{\mathcal{T}_1}^2 + \| A^*(\by^{r,\tau_r} - y^{r,0}) \|^2},
\end{equation}
 respectively. Consequently, we update \(\sigma_{r+1}\) as follows:
\begin{equation}\label{eq:sigma_approx}
\sigma_{r+1} = \frac{\Delta_x}{\Delta_y}.
\end{equation}
Note that the approximations \(\Delta_x\) and \(\Delta_y\) may deviate significantly from the true values, so we update \(\sigma\) using formula \eqref{eq:sigma_approx} only when the following conditions are met; otherwise, we reset \(\sigma\) to 1:
\begin{enumerate}
    \item \(\Delta_x\) and \(\Delta_y\) are within a suitable range:
    \begin{equation}\label{eq:Dx_Dy}
          \Delta_x\in (10^{-16},10^{12}), \quad \Delta_y\in (10^{-16},10^{12});
    \end{equation}
    \item The relative infeasibilities of primal and dual problems should not differ excessively:
    \begin{equation}\label{eq:primal_dual_infeasibility}
\frac{{\rm error}_{d}}{{\rm error}_{p}} \in (10^{-8},10^{8}),
 \end{equation}
 where
   $$
    {\rm error}_{p} := \frac{\|\Pi_{D}(b - A\bx^{r,\tau_r})\|}{1 + \|b\|}, \quad {\rm error}_{d} := \frac{\|c - A^{*}\by^{r,\tau_r} - \bz^{r,\tau_r}\|}{1 + \|c\|}.
    $$
\end{enumerate}
In summary, the update rule for $\sigma$ is presented in Algorithm \ref{alg:sigma}.
    \begin{algorithm}[H]
	\caption{${\rm \textbf{SigmaUpdate}}$}
	\label{alg:sigma}
	\begin{algorithmic}[1]
    	\State {\textbf{Input:} $(\bw^{r,\tau_{r}},w^{r,0},\cT_1,A)$.}                     \State {Calculate $\Delta_x$ and $\Delta_y$ defined in \eqref{eq:Dx&Dy};}
            \If{ conditions \eqref{eq:Dx_Dy} and \eqref{eq:primal_dual_infeasibility} are satisfied}
            \State{$\displaystyle \sigma_{r+1}=\frac{\Delta_x}{\Delta_y};$}
            \Else
            \State{ $\sigma_{r+1}=1;$}
            \EndIf
            \State{\textbf{Output:} $\sigma_{r+1}$.}
	\end{algorithmic}
    \end{algorithm}
\begin{remark}

The update rule for \(\sigma\) is not only applicable to LP problems but can also be directly extended to the HPR method with semi-proximal terms \cite{sun2024accelerating} for solving more general convex optimization problems \eqref{primal}.
\end{remark}
We discuss two special choices of proximal operator $\cT_1$ to obtain the update formula of $\sigma$ as follows. For other choices of $\cT_1$, one can use \eqref{eq:sigma_approx} to determine $\sigma$.

\begin{enumerate}
    \item $\cT_1=0$. For example, consider the LP problems without inequality constraint (i.e., \(m_2=0\)). At the \(r\)-th outer loop and \(t\)-th inner loop, \(\by^{r,t+1}\) can be obtained by solving the following linear equations:
\begin{equation}\label{eq:AATy=R}
AA^{*}\by^{r,t+1} = \frac{1}{\sigma}(b - A(\bx^{r,t+1} + \sigma (\bz^{r,t+1} - c)) ).
\end{equation}
Solving the linear equations \eqref{eq:AATy=R} by the direct method is affordable in many applications such as in the optimal transport problem \cite{zhang2024hot} or the Wasserstein barycenter problem \cite{zhang2022efficient}. According to \eqref{eq:sigma_approx}, if conditions \eqref{eq:Dx_Dy} and \eqref{eq:primal_dual_infeasibility} are satisfied, then \(\sigma_{r+1}\) is computed as:
\begin{equation}\label{eq:sigma_T1=0}
\sigma_{r+1} = \frac{\|\bx^{r,\tau_r} - x^{r,0}\|}{\|A^{*}(\by^{r,\tau_r} - y^{r,0})\|}.
\end{equation}
\item $\mathcal{T}_1 = \lambda I_{m} - AA^{*}$ with $\lambda \geq \lambda_{1}(AA^{*})$ as proposed in \cite{esser2010general,chambolle2011first,xu2011class}. This applies when inequality constraints are present (i.e., \(m_2 > 0\)) or when solving the linear equations \eqref{eq:AATy=R} directly is not affordable. In this case,
\begin{equation}\label{update-y}
  \left\{
\begin{array}{ll}
\displaystyle \by_{1}^{r,t+1} = y^{r,t}_1+ \frac{1}{\lambda }(b_1/\sigma - A_1R_{y}),\\
\by_{2}^{r,t+1} = \Pi_{\mathbb{R}^{m_2}_+}\left(y^{r,t}_2+ \frac{1}{\lambda }(b_2/\sigma - A_2R_{y})\right),\\
\end{array}
\right.  
\end{equation}
where $R_{y}:=\bx^{r,t+1}/\sigma + (A^{*}y^{r,t}+ \bz^{r,t+1} - c)$. If conditions \eqref{eq:Dx_Dy} and \eqref{eq:primal_dual_infeasibility} are satisfied, then \(\sigma_{r+1}\) is given by:
\begin{equation}\label{eq:sigma_T1!=0}
\sigma_{r+1}=\frac{1}{\sqrt{\lambda}}\frac{\|\bx^{r,\tau_r} - x^{r,0}\|}{\|\by^{r,\tau_r} - y^{r,0}\|}.
\end{equation}
\end{enumerate}
\begin{remark}
The formula \eqref{eq:sigma_T1!=0} is similar to the primal weight \(\omega\) update formula with \(\theta = 1\) in Algorithm 3 of \cite{applegate2021practical}, except for the inclusion of the term \(\lambda\).
\end{remark}

{
\subsection{The GPU implementation of the HPR-LP}  

We first present the update formulas for each subproblem in HPR-LP (Steps 6-8). Specifically, for any \( r \geq 0 \) and \( t \geq 0 \), the update of \( \bz^{r,t+1} \) is given by:  

\begin{equation}\label{update-z}
\begin{array}{ll}
\bz^{r,t+1} &\displaystyle= \underset{z \in \mathbb{R}^{n}}{\arg \min }\left\{L_{\sigma_r}\left(y^{r,t}, z ; x^{r,t}\right)\right\}\\ 
&\displaystyle=  \frac{1}{\sigma_r} \Big( \Pi_C \big(x^{r,t}+\sigma_r(A^* y^{r,t}-c)\big) - \big(x^{r,t}+\sigma_r(A^* y^{r,t}-c)\big) \Big).
\end{array}
\end{equation}  
Next, the update of \( \bx^{r,t+1} \) is:  
\begin{equation}\label{update-x}
\bx^{r,t+1} = x^{r,t} + \sigma_r (A^{*} y^{r,t} + \bz^{r,t+1} - c)  
= \Pi_C\big(x^{r,t}+\sigma_r(A^* y^{r,t}-c)\big).
\end{equation}  
For general LP problems, we set \( \mathcal{T}_1 = \lambda I_{m} - AA^{*} \) with \( \lambda \geq \lambda_{1}(AA^{*}) \) in the HPR-LP. Consequently, following from \eqref{update-y} and \eqref{update-x}, the update for \( \by^{r,t+1} \) is given by:  

\begin{equation}\label{update-y-2}
\left\{
\begin{array}{ll}
\displaystyle \by_{1}^{r,t+1} = y^{r,t}_1+ \frac{1}{\lambda \sigma_r} \Big(b_1 - A_1(2\bx^{r,t+1}-x^{r,t})\Big),\\
\displaystyle \by_{2}^{r,t+1} = \Pi_{\mathbb{R}^{m_2}_+} \Big( y^{r,t}_2+ \frac{1}{\lambda \sigma_r} \Big(b_2 - A_2(2\bx^{r,t+1}-x^{r,t}) \Big) \Big).
\end{array}
\right.  
\end{equation}  
By combining \eqref{update-z}, \eqref{update-x}, and \eqref{update-y-2}, we observe that it is not necessary to compute \( \bz^{r,t+1} \) at every iteration. Instead, we only need to evaluate \( \bz^{r,t+1} \) using \eqref{update-z} when checking the termination criteria for stopping HPR-LP. Furthermore, the update of \eqref{update-z}, \eqref{update-x}, and \eqref{update-y-2} mainly involves the matrix-vector multiplications, vector additions, and projections. The overall per-iteration complexity of HPR-LP is $O(\operatorname{nnz}(A))$, where $\operatorname{nnz}(A)$  denotes the number of nonzero entries in $A$. 

Moreover, to fully utilize the parallel computing power of GPUs, we implement custom CUDA kernels for \eqref{update-z}, \eqref{update-x}, and \eqref{update-y-2}. For matrix-vector multiplications, we utilize \texttt{cusparseSpMV()} from the cuSPARSE library, which applies the \texttt{CUSPARSE\_SPMV\_CSR\_ALG2} algorithm to ensure deterministic results.

}

\section{Numerical experiments}\label{sec:4}
In this section, we compare the performance of HPR-LP implemented in Julia \cite{bezanson2017julia} with cuPDLP.jl \cite{lu2023cupdlp} on a GPU. The experimental setup is detailed in Section 4.1. Section 4.2 discusses the performance of these two solvers on Mittelmann’s LP benchmark set.\footnote{\href{https://plato.asu.edu/ftp/lpfeas.html}{https://plato.asu.edu/ftp/lpfeas.html}.} Section 4.3 presents numerical results on LP relaxations of instances from the MIPLIB 2017 collection \cite{gleixner2021miplib}. Finally, Section 4.4 highlights the numerical results on extremely large instances, including LPs generated from quadratic assignment problems (QAPs) \cite{burkard1997qaplib}, the ``zib03" instance as discussed by Koch et al. \cite{koch2022progress}, { and the LP formulation of the PageRank problem \cite{nesterov2014subgradient}}.

\subsection{Experimental setup}

\textbf{Benchmark datasets.} We conduct extensive performance experiments on both classical benchmark sets and extremely large-scale instances. Specifically, we evaluate HPR-LP and cuPDLP.jl \cite{lu2023cupdlp} on two classic benchmark datasets: Mittelmann's LP benchmark set and LP relaxations of instances from the MIPLIB 2017 collection. We use 49 publicly available Mittelmann’s LP benchmark instances. For the MIPLIB 2017 set, we initially select 383 instances based on the criteria outlined in \cite{lu2023cupdlp}. After excluding three instances during Gurobi's presolve \cite{gurobi}, we retain 380 instances.\footnote{Two instances are identified as unbounded by Gurobi's presolve, and one instance is solved by Gurobi's presolve.} We then split the MIP relaxations into three classes based on the number of nonzeros in the constraint matrix, as shown in Table \ref{tab:claases-MIP_R}, following the approach in \cite{lu2023cupdlp}. In addition, to further explore the limits of the capability of HPR-LP, we employ several extremely large-scale LP problems referenced in \cite{lu2023cupdlp_c}, including LPs generated from QAPs \cite{burkard1997qaplib} the ``zib03" instance from \cite{koch2022progress}, {and several instances of PageRank problem \cite{nesterov2014subgradient}}.
\begin{table}[ht!]
\centering
\caption{ Scales of instances in MIP relaxations.}
\label{tab:claases-MIP_R}
\renewcommand{\arraystretch}{1.5}
{
\begin{tabular}{cccc}
\hline
                    & Small   & Medium & Large \\ \hline
Number  of nonzeros & 100K-1M & 1M-10M & $>$10M  \\
Number of instances & 268     & 94     & 18    \\ \hline
\end{tabular}%
}
\end{table}

~\\
\noindent\textbf{Software and computing environment.} HPR-LP is implemented in Julia \cite{bezanson2017julia}, referred to as HPR-LP.jl. Similarly, cuPDLP.jl \cite{lu2023cupdlp}, the GPU version of PDLP, is also implemented in Julia.\footnote{We downloaded the cuPDLP.jl from \href{https://github.com/jinwen-yang/cuPDLP.jl}{https://github.com/jinwen-yang/cuPDLP.jl} on July 24th, 2024. The infeasibility detection function of cuPDLP.jl is disabled.} All tested solvers are run on an NVIDIA A100-SXM4-80GB GPU with CUDA 12.3, and the experiments are conducted in Julia 1.10.4 on Ubuntu 22.04.3 LTS.

~\\
\noindent\textbf{Presolve and preconditioning.} To assess the impact of presolve on the FOMs, we compare all tested algorithms on two sets of instances: the original instances without presolve and the instances with presolve by Gurobi 11.0.3 (academic license). For preconditioning in HPR-LP, we perform 10 iterations of Ruiz scaling \cite{ruiz2001scaling}, followed by the bidiagonal preconditioning used by Pock and Chambolle \cite{pock2011diagonal} with \(\alpha = 1\). Finally, we normalize \(b\) and \(c\) divided by \(\|b\| + 1\) and \(\|c\| + 1\), respectively. For cuPDLP.jl \cite{lu2023cupdlp}, we use the default preconditioning settings.

~\\
\noindent\textbf{Initialization and parameter setting.} We initialize HPR-LP with the {origin} point and set the penalty parameter \(\sigma_0 = 1\).  After preconditioning, we choose $\cT_{1}=\lambda_{1}(AA^{*})I_{m}-AA^{*}$ and compute \(\lambda_{1}(AA^{*})\) using the power method \cite{golub2013matrix}. {In HPR-LP, the restart criteria are based on conditions \eqref{eq:restart_1}, \eqref{eq:restart_2}, and \eqref{eq:restart_3}, with parameters  $\alpha_1 = 0.2$,  $\alpha_2 = 0.6$ , and  $\alpha_3 = 0.2$. The penalty parameter  $\sigma$  is updated using formula \eqref{eq:sigma_T1!=0}.}
For cuPDLP.jl \cite{lu2023cupdlp}, the default settings are applied. 

~\\
\noindent\textbf{Termination criteria.} In HPR-LP, the sequence $\{\bw^{r,t}\}$ is used to check the stopping criteria, which automatically satisfies $\bx^{r,t} \in C$ and $\by^{r,t} \in D$ for any $r\geq0,$ and $ t\geq1$. We terminate HPR-LP when the following stopping criteria used in PDLP \cite{applegate2021practical,lu2023cupdlp,lu2023cupdlp_c} are satisfied for the tolerance \(\varepsilon \in (0,\infty)\):
\[
\begin{aligned}
&\left| \langle b, y\rangle -\delta^{*}_{C}(-z) - \langle c, x \rangle \right| \leq \varepsilon \left( 1 + \left| \langle b, y\rangle - \delta^{*}_{C}(-z) \right| + \left| \langle c, x \rangle \right| \right), \\
&\|\Pi_{D}(b - Ax)\| \leq \varepsilon \left( 1 + \| b \| \right), \\
&\left\| c - A^{*} y - z \right\| \leq \varepsilon \left( 1 + \| c \| \right).
\end{aligned}
\]
We evaluate the performance of all tested algorithms with $\varepsilon=10^{-4}$, $10^{-6},$ and $10^{-8}$ for all the datasets. Moreover, in HPR-LP.jl, we check the termination and restart criteria every 150 iterations, {whereas cuPDLP.jl uses its default setting of checking termination criteria every 64 iterations.}

~\\
\noindent\textbf{Time limit.} In Section 4.3, we impose 15000 seconds as the time limit in Mittelmann’s benchmark. For Section 4.2, we impose a time limit of 3600 seconds on instances with small-sized and medium-sized instances, and a time limit of 18000 seconds for large instances. For Section 4.4, we impose {18000} seconds as the time limit for the LP instances generated by QAPs {and PageRank problems}, and 36000 seconds for the ``zib03" instance.
~\\

\noindent\textbf{Shifted geometric mean.} We use the shifted geometric mean of solving time, as employed in Mittelmann's benchmarks, to measure the performance of solvers on a collection of problems. Specifically, the shifted geometric mean is defined as
\[
(\prod_{i=1}^n\left(t_i+\Delta\right))^{1 / n}-\Delta,
\]
where \(t_i\) is the solving time in seconds for the \(i\)-th instance. We shift by \(\Delta = 10\) and denote this measure as SGM10. If an instance remains unsolved, the solving time is set to the corresponding time limit.  As with cuPDLP.jl \cite{lu2023cupdlp}, we exclude the time required for reading data, presolving, and preconditioning. For cuPDLP.jl, we use the default timing settings. For HPR-LP.jl, in addition to the algorithm runtime, we also include the time spent on the power method.

\subsection{Mittelmann’s LP benchmark set}
Mittelmann’s LP benchmark set is a standard benchmark for evaluating the numerical performance of different LP solvers. In this experiment, {we first analyze the impact of the restart strategy, the Halpern iteration, the update rule for \(\sigma\), and the relaxation step (Step 9 in Algorithm \ref{alg:HPR_LP}).} We then compare HPR-LP.jl with cuPDLP.jl on Mittelmann's LP benchmark set. 

{
Figure \ref{Fig:impact} summarizes the relative impact of HPR-LP's enhancements on 49 instances of Mittelmann’s LP benchmark set without presolve at a tolerance of \(10^{-8}\). As shown in subtable (a) of Figure \ref{Fig:impact}, the enhancements described in Section 3 significantly improve the performance of the baseline Douglas-Rachford (DR) method, which corresponds to Steps 1–3 in Algorithm \ref{alg:sp-HPR}. Meanwhile, subfigure (b) of Figure \ref{Fig:impact} presents the normalized SGM10 values, computed relative to the baseline DR method, offering a clear visual representation of performance gains. Specifically, incorporating restart criteria and the Halpern iteration, the Halpern DR method for LP (HDR-LP) with a fixed \(\sigma\) solves 4 more problems than the DR method and achieves a \textbf{2.30×} speedup in terms of SGM10. Furthermore, introducing the adaptive update rule for \(\sigma\) allows HDR-LP to solve 9 more problems than HDR-LP with a fixed \(\sigma\), resulting in a \textbf{5.20×} speedup in terms of SGM10. Finally, by incorporating an additional relaxation step (Step 9 in Algorithm \ref{alg:HPR_LP}), HPR-LP benefits from a larger step size than HDR-LP, leading to a \textbf{1.67×} speedup over HDR-LP in terms of SGM10.

\begin{figure}[H]
    \centering
    \begin{subfigure}[b]{0.9\textwidth}
        \centering
          \caption{SGM10 and the number of solved problems.}
          \renewcommand{\arraystretch}{1.5} 
        \begin{tabular}{l rr}
            \toprule
            Enhancements                  & Solved & SGM10 \\ 
            \midrule
            DR     & 27     & 2087.3     \\ 
           + restart \& Halpern (=HDR-LP with fixed $\sigma$) & 31     & 905.7      \\ 
           + $\sigma$ update  (=HDR-LP)                 & 42     & 174.1      \\ 
           + relaxation (=HPR-LP)                  & 43     & 103.8      \\ 
            \bottomrule
        \end{tabular}
    \end{subfigure}
    
    \hfill
    \begin{subfigure}[b]{0.9\textwidth}
        \centering
        \includegraphics[width=\textwidth]{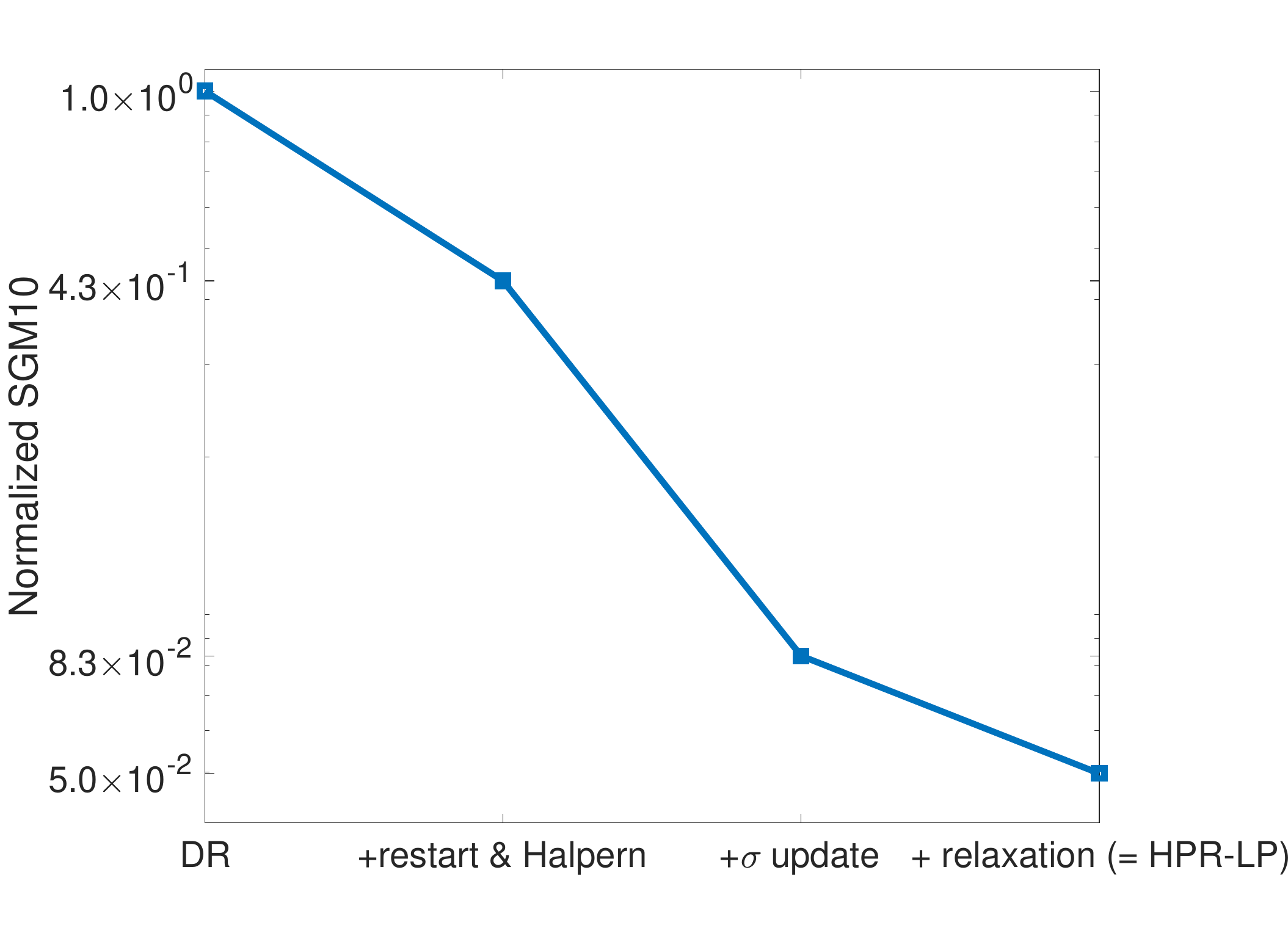}
        \caption{Normalized SGM10.}
    \end{subfigure}
    \caption{Relative impact of HPR-LP's enhancements on Mittelmann’s LP benchmark set without presolve.}
    \label{Fig:impact}
\end{figure}

}

The numerical performance of HPR-LP.jl and cuPDLP.jl, both with and without presolve, is presented in Tables \ref{tab: Hans-presolved} and \ref{tab: Hans-unpresolved}, respectively. The main results are summarized as follows:
\begin{itemize}[label=$\bullet$]
\item HPR-LP.jl consistently solves \textbf{3} to \textbf{5} more problems than cuPDLP.jl does across all tolerance levels, as shown in Tables \ref{tab: Hans-presolved} and \ref{tab: Hans-unpresolved}.
\item In terms of SGM10, HPR-LP.jl outperforms cuPDLP.jl on Mittelmann's LP benchmark set across all tolerance levels. For instance, as shown in Table \ref{tab: Hans-presolved}, HPR-LP.jl achieves a \textbf{3.71x} speedup over cuPDLP.jl to obtain a solution with a \(10^{-8}\) relative accuracy for the presolved dataset. Similarly, in Table \ref{tab: Hans-unpresolved}, HPR-LP.jl achieves a \textbf{2.68x} speedup over cuPDLP.jl to attain the same tolerance for the unpresolved dataset.
\item HPR-LP.jl exhibits better performance on the presolved dataset compared to the unpresolved dataset, since HPR-LP.jl solves more problems on the presolved Mittelmann’s LP benchmark set across all tolerance levels.
\end{itemize}

\begin{table}[H]
\centering
\caption{Numerical performance of different solvers on 49 instances of Mittelmann's LP benchmark set with presolve.}
\label{tab: Hans-presolved}
\renewcommand{\arraystretch}{1.5} 
\resizebox{\textwidth}{!}{%
\begin{tabular}{ccccccc}
\toprule
Tolerance & \multicolumn{2}{c}{$10^{-4}$} & \multicolumn{2}{c}{$10^{-6}$} & \multicolumn{2}{c}{$10^{-8}$} \\
\midrule
Solvers   & SGM10 & Solved & SGM10 & Solved & SGM10 & Solved \\
\midrule
cuPDLP.jl & 60.0  & 46     & 118.6      & 45      & 220.6 & 43    \\
HPR-LP.jl & 17.4   & 49     & 31.8      & 49      & 59.4  &  48    \\
\bottomrule
\end{tabular}
}
\end{table}
\begin{table}[ht!]
\centering
\caption{Numerical performance of different solvers on 49 instances of Mittelmann's LP benchmark set without presolve.}
\label{tab: Hans-unpresolved}
\renewcommand{\arraystretch}{1.5} 
\resizebox{\textwidth}{!}{%
\begin{tabular}{c c c c c c c}
\toprule
Tolerance & \multicolumn{2}{c}{$10^{-4}$} & \multicolumn{2}{c}{$10^{-6}$} & \multicolumn{2}{c}{$10^{-8}$} \\
\midrule
Solvers   & SGM10 & Solved & SGM10 & Solved & SGM10 & Solved \\
\midrule
cuPDLP.jl & 76.9   & 42     & 156.2  & 41     & 277.9  & 40     \\
HPR-LP.jl & 30.2   & 47     & 69.1   & 44      & 103.8  & 43     \\
\bottomrule
\end{tabular}
}
\end{table}

{
To further compare HPR-LP.jl with cuPDLP.jl, we present the shifted geometric mean of iteration counts for different solvers on 49 instances of Mittelmann’s LP benchmark set in Table \ref{tab:Hans-merged}, similar to the SGM10 metric for time. As shown in Table \ref{tab:Hans-merged}, HPR-LP.jl requires fewer iterations than cuPDLP.jl across different tolerance levels. Additionally, Table \ref{tab:per-iteration} reports the per-iteration time of both solvers. On average, the per-iteration time of cuPDLP.jl is \textbf{2.59×} that of HPR-LP.jl with presolve and \textbf{2.22×} without presolve. One possible reason is that cuPDLP.jl employs a line search strategy to achieve a larger step size, which incurs significant computational overhead \cite{lu2023cupdlp}. In contrast, HPR-LP.jl makes use of the theoretical advance in \cite{sun2024accelerating} to choose the desirable relaxation parameter value of 2 (Step 9 in Algorithm 2) to obtain a large step size, providing a guaranteed theoretical foundation for simpler and more efficient implementation.

\begin{table}[H]
\centering
\caption{Shifted geometric mean of iteration counts for different solvers on 49 instances of Mittelmann’s LP benchmark set, with and without presolve.}
\label{tab:Hans-merged}
\renewcommand{\arraystretch}{1.5} 
\resizebox{\textwidth}{!}{%
\begin{tabular}{c rr r rr r}
\toprule
& \multicolumn{3}{c}{With Presolve} & \multicolumn{3}{c}{Without Presolve} \\ 
\cmidrule(lr){2-4} \cmidrule(lr){5-7}
Solver & $10^{-4}$ & $10^{-6}$ & $10^{-8}$ & $10^{-4}$ & $10^{-6}$ & $10^{-8}$ \\ 
\midrule
cuPDLP.jl & 47694.3  & 120748.1  & 253827.3  & 42997.0  & 125972.9  & 260003.4  \\ 
HPR-LP.jl & 27661.1  & 71380.7   & 150375.0  & 36018.2  & 115121.2  & 206069.4  \\ 
\bottomrule
\end{tabular}
}
\end{table}

\begin{table}[H]
\centering
\caption{Per-iteration time in seconds of different solves for Mittelmann’s LP benchmark set.}

\renewcommand{\arraystretch}{1.5} 
\resizebox{\textwidth}{!}{%

\label{tab:per-iteration}
\begin{tabular}{l rr rr}
\toprule
& \multicolumn{2}{c}{With Presolve} & \multicolumn{2}{c}{Without Presolve} \\ 
\cmidrule(lr){2-3} \cmidrule(lr){4-5}
Metric & HPR-LP.jl & cuPDLP.jl & HPR-LP.jl & cuPDLP.jl \\ 
\midrule
Median              & 1.4e-4  & 4.3e-4  & 2.0e-4  & 5.2e-4  \\ 
Mean                & 3.7e-4  & 9.6e-4  & 5.4e-4  & 1.2e-3  \\ 
Standard Deviation  & 9.0e-4  & 1.7e-3  & 7.2e-5  & 3.1e-4  \\ 
Min                 & 7.7e-5  & 3.0e-4  & 6.2e-3  & 1.1e-2  \\ 
Max                 & 6.2e-3  & 1.1e-2  & 1.1e-3  & 2.0e-3  \\ 
\bottomrule
\end{tabular}
}
\end{table}

}

\subsection{MIP relaxations}
In this experiment, we compare HPR-LP.jl with cuPDLP.jl on the MIP relaxations set, which includes 380 instances. The numerical performance of these two solvers, both with and without presolve, is presented in Tables \ref{tab:MIP_presolved} and \ref{tab:MIP_unpresolved}, respectively. The main results are summarized as follows:
\begin{itemize}[label=$\bullet$]
\item With a \(10^{-8}\) accuracy, HPR-LP.jl solves \textbf{7} more problems than cuPDLP.jl does across the 380 presolved MIP relaxation instances, as shown in Table \ref{tab:MIP_presolved}. Moreover, on the unpresolved dataset, HPR-LP.jl solves more problems than cuPDLP.jl does in different stopping tolerances, as shown in Table \ref{tab:MIP_unpresolved}.
\item In terms of SGM10 across 380 instances, HPR-LP.jl consistently outperforms cuPDLP.jl at all tolerance levels. As shown in Table \ref{tab:MIP_presolved}, HPR-LP.jl achieves a \textbf{2.39x} speedup over cuPDLP.jl to obtain a solution with a \(10^{-8}\) accuracy for the presolved dataset. Similarly, in Table \ref{tab:MIP_unpresolved}, HPR-LP.jl achieves a \textbf{2.03x} speedup over cuPDLP.jl to obtain a solution with the same accuracy for the unpresolved dataset.
\end{itemize}

\begin{table}[H]
\centering
\caption{Numerical performance of different solvers on 380 instances of MIP relaxations with presolve.}
\label{tab:MIP_presolved}
\renewcommand{\arraystretch}{1.5} 
\resizebox{\textwidth}{!}{%
\begin{tabular}{c c c c c c c c}
\toprule
\multirow{2}{*}{Classes} & Tolerance & \multicolumn{2}{c}{$10^{-4}$} & \multicolumn{2}{c}{$10^{-6}$} & \multicolumn{2}{c}{$10^{-8}$} \\
\cmidrule(lr){2-8}
                         & Solvers   & SGM10 & Solved & SGM10 & Solved & SGM10 & Solved \\
\midrule
\multirow{2}{*}{Small (268)} & cuPDLP.jl & 8.4    & 262    & 15.3      & 261      & 23.7   & 254    \\
                             & HPR-LP.jl & 4.1    & 262    & 6.3      & 261      & 8.9    & 261    \\
\midrule
\multirow{2}{*}{Medium (94)} & cuPDLP.jl & 10.4   & 94      & 23.6      & 92      & 35.4   & 92     \\
                             & HPR-LP.jl & 5.8    & 94     & 11.0      & 92      & 15.8   & 92     \\
\midrule
\multirow{2}{*}{Large (18)}  & cuPDLP.jl & 31.7   & 17     & 64.1      & 17      & 102.4  & 17     \\
                             & HPR-LP.jl & 22.9   & 17     & 44.0      & 17      & 76.0   & 17     \\
\midrule
\multirow{2}{*}{Total (380)} & cuPDLP.jl & 9.6    & 373    & 18.6      & 370      & 28.4   & 363    \\
                             & HPR-LP.jl & 5.1    & 373    & 8.3      & 370      & 11.9   & 370    \\
\bottomrule
\end{tabular}
}
\end{table}
\begin{table}[H]
\centering
\caption{Numerical performance of different solvers on 380 instances of MIP relaxations without presolve.}
\label{tab:MIP_unpresolved}
\renewcommand{\arraystretch}{1.5} 
\resizebox{\textwidth}{!}{%
\begin{tabular}{c c c c c c c c}
\toprule
\multirow{2}{*}{Classes} & Tolerance & \multicolumn{2}{c}{$10^{-4}$} & \multicolumn{2}{c}{$10^{-6}$} & \multicolumn{2}{c}{$10^{-8}$} \\
\cmidrule(lr){2-8}
                         & Solvers   & SGM10 & Solved & SGM10 & Solved & SGM10 & Solved \\
\midrule
\multirow{2}{*}{Small (268)} & cuPDLP.jl & 11.5   & 266    & 20.0      & 262      & 27.7   & 258    \\
                             & HPR-LP.jl & 4.6    & 267    & 7.6      & 265      & 12.6   & 259    \\

\midrule
\multirow{2}{*}{Medium (94)} & cuPDLP.jl & 14.9   & 90     & 26.7      & 89      & 43.8   & 87     \\
                             & HPR-LP.jl & 7.4    & 92     & 13.7      & 91      & 19.8   & 90     \\

\midrule
\multirow{2}{*}{Large (18)}  & cuPDLP.jl & 129.8  & 16     & 253.3      & 15      & 442.2  & 14     \\
                             & HPR-LP.jl & 117.6  & 17     & 260.7      & 15      & 428.6  & 14     \\
\midrule
\multirow{2}{*}{Total (380)} & cuPDLP.jl & 14.3   & 372    & 25.0      & 366      & 36.3   & 359    \\
                             & HPR-LP.jl & 6.9    & 376    & 11.6      & 371      & 17.9   & 363    \\
\bottomrule
\end{tabular}
}
\end{table}

\subsection{{Large-scale applications: QAP,  ZIB problem, and PageRank instances}}
In this experiment, we evaluate the performance of HPR-LP.jl and cuPDLP.jl on extremely large-scale LP datasets. Specifically, we first apply Adams-Johnson linearization \cite{adams1994improved} to generate LP instances of quadratic assignment problems (QAPs) from QAPLIB \cite{burkard1997qaplib}, formulated as follows:
\begin{equation}
\begin{aligned}
\min_{x, s} & \quad \sum_{i, j} \sum_{k, l} a_{ik} b_{jl} s_{ijkl} \\
\text{s.t.} & \quad \sum_i s_{ijkl} = x_{kl}, & \quad j, k, l &= 1, \ldots, N, \\
            & \quad \sum_j s_{ijkl} = x_{kl}, & \quad i, k, l &= 1, \ldots, N, \\
            & \quad s_{ijkl} = s_{klij}, & \quad i, j, k, l &= 1, \ldots, N, \\
            & \quad s_{ijkl} \geq 0, & \quad i, j, k, l &= 1, \ldots, N, \\
            & \quad \sum_j x_{ij} = 1, & \quad i &= 1, \ldots, N, \\
            & \quad \sum_i x_{ij} = 1, & \quad j &= 1, \ldots, N, \\
            & \quad 0 \leq x_{ij} \leq 1, & \quad i, j &= 1, \ldots, N,
\end{aligned}
\end{equation}
where \((a_{ik})_{N \times N}\) represents the flow matrix, and \((b_{jl})_{N \times N}\) represents the distance matrix in the facility location application. The solving time and SGM10 for the two tested algorithms on 20 QAP instances, both with and without presolve, are presented in Tables \ref{tab:QAP_presolved} and \ref{tab:QAP_unpresolved}, respectively. Across all tolerance levels, HPR-LP.jl consistently outperforms cuPDLP.jl in terms of SGM10. For example, HPR-LP.jl achieves a \textbf{{5.70x}} speedup over cuPDLP.jl on the presolved dataset in terms of SGM10 to obtain a solution with a \(10^{-8}\) relative accuracy, as shown in Table \ref{tab:QAP_presolved}. On the unpresolved dataset, as shown in Table \ref{tab:QAP_unpresolved}, HPR-LP.jl achieves a \textbf{{2.57x}} speedup over cuPDLP.jl in terms of SGM10 for the same relative accuracy.

\begin{table}[H]
\centering
\caption{Solving time in seconds and SGM10 for different solvers on 20 QAP instances \cite{burkard1997qaplib} with presolve.}
\label{tab:QAP_presolved}
\renewcommand{\arraystretch}{1.5} 
\resizebox{\textwidth}{!}{%
\begin{tabular}{ccccccc}
\toprule
Tolerance & \multicolumn{2}{c}{$10^{-4}$} & \multicolumn{2}{c}{$10^{-6}$} & \multicolumn{2}{c}{$10^{-8}$} \\ \midrule
Solver    & HPR-LP.jl     & cuPDLP.jl     & HPR-LP.jl     & cuPDLP.jl     & HPR-LP.jl     & cuPDLP.jl     \\ \midrule
esc64a    & 5.8           & 5.4           & 6.2           & 7.0           & 6.9           & 14.6          \\
lipa40a   & 0.6           & 5.5           & 3.5           & 35.2          & 19.3          & 240.2         \\
lipa40b   & 0.5           & 3.2           & 2.5           & 11.4          & 13.7          & 93.8          \\
lipa50a   & 6.2           & 11.2          & 8.2           & 72.7          & 38.5          & 365.3         \\
lipa50b   & 0.9           & 7.0           & 5.8           & 20.3          & 32.6          & 172.1         \\
lipa60a   & 2.5           & 20.8          & 12.4          & 141.8         & 85.5          & 1348.7        \\
lipa60b   & 2.1           & 8.3           & 10.0          & 37.2          & 71.4          & 275.9         \\
lipa70a   & 4.5           & 40.2          & 25.5          & 304.7         & 168.5         & 2111.9        \\
lipa70b   & 4.8           & 15.0          & 22.0          & 76.0          & 129.3         & 571.2         \\
sko56     & 1.8           & 18.0          & 8.1           & 108.5         & 100.0         & 550.0         \\
sko64     & 3.1           & 28.6          & 12.3          & 183.4         & 125.0         & 1650.6        \\
tai40a    & 0.5           & 3.8           & 1.9           & 12.9          & 14.3          & 99.9          \\
tai40b    & 4.3           & 11.0          & 12.9          & 141.6         & 215.3         & 456.3         \\
tai50a    & 1.0           & 6.4           & 4.2           & 24.4          & 31.4          & 186.8         \\
tai50b    & 1.8           & 14.6          & 12.8          & 522.2         & 303.2         & 839.4         \\
tai60a    & 2.7           & 9.1           & 8.4           & 46.6          & 66.1          & 344.3         \\
tai60b    & 18.9          & 160.6         & 48.9          & 311.5         & 1099.8        & 7641.9        \\
tai64c    & 2.1           & 4.9           & 2.1           & 5.6           & 2.6           & 5.5           \\
tho40     & 0.5           & 7.7           & 1.8           & 44.3          & 14.5          & 360.1         \\
wil50     & 1.3           & 5.9           & 4.1           & 34.3          & 56.1          & 341.2         \\ \midrule
SGM10     & 2.9           & 12.7          & 8.8           & 60.0          & 60.2          & 343.1         \\ \bottomrule

\end{tabular}%
}
\end{table}

\begin{table}[H]
\centering
\caption{Solving time in seconds and SGM10 for different solvers on 20 QAP instances \cite{burkard1997qaplib} without presolve.}
\label{tab:QAP_unpresolved}
\renewcommand{\arraystretch}{1.5} 
\resizebox{\textwidth}{!}{%
\begin{tabular}{ccccccc}
\toprule
Tolerance & \multicolumn{2}{c}{$10^{-4}$} & \multicolumn{2}{c}{$10^{-6}$} & \multicolumn{2}{c}{$10^{-8}$} \\ \midrule
Solver    & HPR-LP.jl     & cuPDLP.jl     & HPR-LP.jl     & cuPDLP.jl     & HPR-LP.jl     & cuPDLP.jl     \\ \midrule
esc64a    & 7.6           & 7.8           & 8.6           & 9.0           & 10.0          & 13.9          \\
lipa40a   & 4.2           & 13.7          & 37.7          & 112.7         & 322.6         & 1601.5        \\
lipa40b   & 5.1           & 16.6          & 45.9          & 110.6         & 421.5         & 1317.4        \\
lipa50a   & 14.6          & 36.8          & 103.2         & 285.8         & 756.4         & 2446.9        \\
lipa50b   & 14.7          & 38.9          & 115.2         & 372.0         & 1187.3        & 4244.2        \\
lipa60a   & 27.2          & 95.5          & 218.2         & 479.7         & 1961.1        & 4556.0        \\
lipa60b   & 36.0          & 68.8          & 273.9         & 823.0         & 2642.0        & 7991.0        \\
lipa70a   & 54.4          & 170.0         & 461.0         & 1115.0        & 3639.9        & 10585.7       \\
lipa70b   & 75.9          & 154.5         & 534.7         & 1383.3        & 5287.8        & 15409.4       \\
sko56     & 32.6          & 55.7          & 515.6         & 1123.7        & 8511.9        & 15220.3       \\
sko64     & 37.5          & 85.1          & 357.0         & 994.5         & 4125.7        & 9047.3        \\
tai40a    & 5.8           & 12.7          & 45.2          & 163.0         & 426.9         & 1342.8        \\
tai40b    & 8.5           & 32.8          & 184.9         & 361.2         & 3898.6        & 13098.4       \\
tai50a    & 15.3          & 29.0          & 126.0         & 283.6         & 995.9         & 3322.1        \\
tai50b    & 16.4          & 62.5          & 449.5         & 836.4         & 8287.1        & 14482.2        \\
tai60a    & 35.3          & 76.8          & 272.5         & 811.0         & 2408.6        & 8713.1        \\
tai60b    & 62.6          & 163.6         & 1939.7        & 1663.0        & 18000.0       & 18000.0       \\
tai64c    & 4.8           & 9.2           & 7.7           & 12.6          & 12.4          & 35.5          \\
tho40     & 5.0           & 15.2          & 39.4          & 141.7         & 613.3         & 1405.6        \\
wil50     & 16.4          & 43.7          & 170.8         & 491.2         & 1171.7        & 3581.3        \\ \midrule
SGM10     & 18.9          & 43.9          & 150.7         & 342.4         & 1246.4        & 3202.5        \\ \bottomrule
\end{tabular}%
}
\end{table}

The numerical results of the instance ``zib03" in Table \ref{tab:zib03} show that HPR-LP.jl achieves a \textbf{4.47x} speedup over cuPDLP.jl on the presolved dataset and a \textbf{4.06x} speedup on the unpresolved dataset, both in terms of SGM10, to return a solution with a \(10^{-8}\) relative accuracy. {Moreover, we observe that for a relative accuracy of \(10^{-4}\), both HPR-LP.jl and cuPDLP.jl require more solution time with presolve than without. One possible explanation is that ``zib03" has been preprocessed by CPLEX 11, and excessive preprocessing may not always enhance algorithm performance. In some cases, it can even be detrimental by introducing numerical issues and disrupting useful problem structures. For instance, in this case, the coefficients of \( c \) originally range from \([1,2]\), but after presolve, they shift to \([1\text{e-}6, 3]\), potentially affecting numerical stability. 

}

\begin{table}[H]
\centering
\caption{Solving time in seconds for the ``zib03" instance \cite{koch2022progress}.}
\label{tab:zib03}
\renewcommand{\arraystretch}{1.5} 
\resizebox{\textwidth}{!}{%
\begin{threeparttable}
\begin{tabular}{c c c c c c c}
\toprule
Tolerance & \multicolumn{2}{c}{$10^{-4}$} & \multicolumn{2}{c}{$10^{-6}$} & \multicolumn{2}{c}{$10^{-8}$} \\
\midrule
Solver    & HPR-LP.jl     & cuPDLP.jl     & HPR-LP.jl     & cuPDLP.jl     & HPR-LP.jl     & cuPDLP.jl     \\
\midrule
With presolve\tnote{a}    & 273.8 & 351.9 & 1317.2 & 1634.6 & 3685.8 & 16462.2 \\
Without presolve\tnote{b} & 154.2 & 237.7 & 1063.6 & 1963.9 & 4865.3 & 19746.4 \\
\bottomrule
\end{tabular}
\begin{tablenotes}
\item[a] Although ``zib03" has been preprocessed by CPLEX 11, we still apply Gurobi’s presolve function to this data for consistency in the experimental setup. The matrix $A$ in ``zib03" contains 19,701,908 rows, 29,069,187 columns, and 104,300,584 non-zeros after presolve.
\item[b] The matrix $A$ in ``zib03" contains 19,731,970 rows, 29,128,799 columns, and 104,422,573 non-zeros without presolve.
\item[c] The commercial LP solver COPT used 16.5 hours to solve this instance on an AMD Ryzen 9 5900X \cite{lu2023cupdlp_c}.
\end{tablenotes}
\end{threeparttable}
}
\end{table}

{
Following the approach in \cite{applegate2021practical}, we generate multiple instances of the PageRank problem with varying numbers of nodes. In this experiment, the presolve procedure has minimal impact on these instances. Therefore, we focus on reporting the numerical results for all tested solvers on the original (unpresolved) dataset, as summarized in Table \ref{tab:pagerank_original}. Across all tolerance levels, {HPR-LP.jl} consistently outperforms {cuPDLP.jl} in terms of SGM10. For instance, on the unpresolved dataset, {HPR-LP.jl} achieves a \textbf{2.16$\times$} speedup over {cuPDLP.jl} when obtaining a solution with \({10^{-8}}\) relative accuracy, as shown in Table \ref{tab:pagerank_original}.
}

\begin{table}[H]
\centering
\caption{Solving time in seconds and SGM10 for different solvers on 4 PageRank instances without presolve.}
\label{tab:pagerank_original}
\renewcommand{\arraystretch}{1.5} 
\resizebox{\textwidth}{!}{%
\begin{tabular}{c rrr rrr}
\toprule
Tolerance     & \multicolumn{2}{c}{$10^{-4}$} & \multicolumn{2}{c}{$10^{-6}$} & \multicolumn{2}{c}{$10^{-8}$} \\ 
\cmidrule(lr){2-3} \cmidrule(lr){4-5} \cmidrule(lr){6-7}
Nodes       & HPR-LP.jl & cuPDLP.jl & HPR-LP.jl & cuPDLP.jl & HPR-LP.jl & cuPDLP.jl \\ 
\midrule
$10^{4}$ & 0.1  & 1.4  & 0.1  & 1.5  & 0.1  & 1.6  \\ 
$10^{5}$ & 0.1  & 1.4  & 0.1  & 1.5  & 0.2  & 1.8  \\ 
$10^{6}$ & 0.4  & 1.7  & 0.5  & 2.4  & 0.8  & 2.8  \\ 
$10^{7}$ & 9.9  & 7.5  & 15.2 & 32.6 & 20.7 & 46.6 \\ 
SGM10         & 2.0  & 2.8  & 2.8  & 6.3  & 3.6  & 7.8  \\ 
\bottomrule
\end{tabular}
}
\end{table}

\section{Conclusion}\label{sec:5}
In this paper, we developed an HPR-LP solver for solving large-scale LP problems, which integrates an adaptive strategy of restart and penalty parameter update into an HPR method with semi-proximal terms. Extensive numerical results on LP benchmark datasets showcased the efficiency of the HPR-LP solver in computing solutions varying from low to high accuracy. Nevertheless, there are still a few instances where the HPR-LP solver fails to obtain solutions within the allocated time. Exploring an accelerated sPADMM with a faster iteration complexity than $O(1/k)$ in terms of the KKT residual may be beneficial. One possible way to improve the performance of the HPR-LP solver is to incorporate the fast Krasnosel'skii-Mann iteration, which can yield an iteration complexity of \(o(1/k)\) in terms of the KKT residual and the objective error \cite{sun2024accelerating}.  Another possible way is to design a similar algorithm with a linear rate better than that possessed by sPADMM \cite{han2018linear}. We leave these as our future research directions.

\appendix
\section{The equivalence between the Halpern accelerating pADMM without proximal terms and the HPR method without proximal terms}\label{appendix-A}
Let $\mathbb{X}$, $\mathbb{Y}$, and $\mathbb{Z}$ be three finite-dimensional real Euclidean spaces, each equipped with an inner product $\langle \cdot, \cdot \rangle$ and its induced norm $\|\cdot\|$. In this appendix, we aim to establish the equivalence between the HPR method without proximal terms \cite{zhang2022efficient} and the Halpern accelerating pADMM without proximal terms \cite{sun2024accelerating} for solving the following convex optimization problem:
	\begin{equation}\label{primal}
		\begin{array}{cc}
			\min _{y \in \mathbb{Y} , z \in \mathbb{Z}} & f_{1}(y)+f_{2}(z)\\
			\text{subject to}& {B}_{1}y+{B}_{2}z=c,
		\end{array}
	\end{equation}
	where $f_{1}: \mathbb{Y} \to (-\infty, +\infty]$ and $f_{2}: \mathbb{Z} \to (-\infty, +\infty]$ are two proper closed convex functions, 
	$B_1: \mathbb{Y} \to \mathbb{X}$ and $B_2: \mathbb{Z} \to \mathbb{X}$ are two given linear operators, and $c\in \mathbb{X}$ is a given point. Let $\sigma >0$ be a given penalty parameter. The augmented Lagrangian function of problem \eqref{primal} is defined by, for any  $(y,z,x) \in \mathbb{Y}\times \mathbb{Z}\times \mathbb{X}$,
	\[
	L_{\sigma}(y, z; x) := f_1(y) + f_2(z) + \langle x, B_1 y + B_2 z - c \rangle+\frac{\sigma}{2}\|B_1 y + B_2 z - c\|^2.
	\]
	The {dual of problem   \eqref{primal} is given by}
	\begin{equation}\label{dual}
		\max _{x \in \mathbb{X}}\left\{-f_{1}^{*}(-B_{1}^* x)-f_{2}^{*}(-B_{2}^* x)-\langle c, x\rangle\right\},
	\end{equation}
	where $B_1^*: \mathbb{X} \to \mathbb{Y}$ and $B_2^*: \mathbb{X} \to \mathbb{Z}$ are the adjoint of $B_1$ and $B_2$, respectively. Subsequently, the HPR method without proximal terms \cite{zhang2022efficient} and the Halpern accelerating pADMM without proximal terms \cite{sun2024accelerating} for solving problem \eqref{primal} are detailed in Algorithm \ref{alg:HPR-OP} and Algorithm \ref{alg:acc-GADMM}, respectively.
    \begin{algorithm}[ht!]
	\caption{An HPR method without proximal terms \cite{zhang2022efficient} for solving convex optimization problem \eqref{primal}}
	\label{alg:HPR-OP}
	\begin{algorithmic}[1]
		\State {Input: $y^{0} \in {\rm dom}(f_1)$, $x^{0} \in \mathbb{X}$, and $\sigma>0$.}
       \State{Initialization: $\Tilde{x}^{0} := x^0$.}
        \For{$k=0,1,...,$}
       	\State {Step 1. $\displaystyle {{z}}^{k+1}= \underset{z\in \mathbb{Z}}{\arg \min }\left\{L_{\sigma}(y^k, z; \Tilde{x}^{k})\right\}$;}
		\State{Step 2. $\displaystyle x^{k+\frac{1}{2}}=\Tilde{x}^k+\sigma (B_{1}{y}^{k}+B_{2}z^{k+1}-c) $;}
		\State {Step 3. $\displaystyle y^{k+1} = \underset{y\in \mathbb{Y}}{\arg \min }\left\{L_{\sigma}(y, z^{k+1}; x^{k+\frac{1}{2}})\right\}$;}
		\State {Step 4. $\displaystyle x^{k+1}= {x}^{k+\frac{1}{2}}+\sigma (B_{1}{y}^{k+1}+B_{2}z^{k+1}-c)$;}		
		\State {Step 5. $\displaystyle \Tilde{x}^{k+1}= \left(\frac{1}{k+2} \Tilde{x}^{0} + \frac{k+1}{k+2} x^{k+1}\right) + \frac{\sigma}{k+2}\left[B_1{y}^{0}    -B_{1}{y}^{k+1}\right] $;}
         \EndFor
	\end{algorithmic}
\end{algorithm}

\begin{algorithm}[ht!]
		\caption{A Halpern accelerating pADMM without proximal terms \cite{sun2024accelerating} for solving convex optimization problem \eqref{primal}}
		\label{alg:acc-GADMM}
		\begin{algorithmic}[1]
			\State {Input: $w^{0}=(y^{0}, z^{0}, x^{0})\in \operatorname{dom} (f_1) \times \operatorname{dom} (f_2)\times \mathbb{X}$, and $\sigma > 0$. {Denote $w=(y,z,x)$ and $\bw=(\by,\bz,\bx)$.}}
           \For{$k=0,1,...,$}
   		\State {Step 1. $\displaystyle \bz^{k+1}=\underset{z\in \mathbb{Z}}{\arg \min }\left\{L_\sigma\left(y^k, z ; x^{k}\right)\right \}$;}		
			\State{Step 2. $\displaystyle \bx^{k+1}={x}^k+\sigma (B_{1}{y}^{k}+B_{2}\bz^{k+1}-c) $;}
            \State {Step 3. $\displaystyle \by^{k+1}=\underset{y \in \mathbb{Y}}{\arg \min }\left\{L_\sigma\left(y, \bz^{k+1} ; \bx^{k+1}\right)\right\}$;}
			\State {Step 4. $\displaystyle \hw^{k+1}= 2\bw^{k+1} -{w}^{k} $;}
			\State {Step 5. $\displaystyle w^{k+1}=\frac{1}{k+2}w^0+\frac{k+1}{k+2}\hw^{k+1}$;}
            \EndFor
		\end{algorithmic}
	\end{algorithm}

\begin{proposition}\label{prop:equiv-HPR} Assume that \(\partial f_1(\cdot) + \sigma B_{1}^{*}B_{1}\) and \(\partial f_2(\cdot) + \sigma B_{2}^{*}B_{2}\) are maximal and strongly monotone, respectively. Let \(w^{0} = (y^{0}, z^{0}, x^{0}) \in \operatorname{dom}(f_1) \times \operatorname{dom}(f_2) \times \mathbb{X}\). Then, the sequence \(\{(y^{k+1}, z^{k+1}, x^{k+\frac{1}{2}})\}\) generated by the HPR method without proximal terms in Algorithm \ref{alg:HPR-OP}, starting from the same initial point \((y^0, x^0) \in \operatorname{dom}(f_1) \times \mathbb{X}\), is equivalent to the sequence \(\{(\by^{k+1}, \bz^{k+1}, \bx^{k+1})\}\) produced by the Halpern accelerating pADMM without proximal terms in Algorithm \ref{alg:acc-GADMM}.
\end{proposition}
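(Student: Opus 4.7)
The plan is to prove the equivalence by induction on $k$, after introducing a composite variable that absorbs the discrepancy between the two algorithms' dual updates. First observe that the starting value $z^0$ in Algorithm~\ref{alg:acc-GADMM} plays no substantive role: none of the three subproblems (Steps~1--3) nor the reflection-and-average step (Steps~4--5) couples the next iterates to $z^k$. Hence any admissible $z^0\in\operatorname{dom}(f_2)$ is allowed, and we line up initial data via $\tilde x^0=x^0$ and a common $y^0$.

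Introduce the composites $\phi^k := x^k + \sigma B_1 y^k$ for Algorithm~\ref{alg:acc-GADMM} and $\tilde\phi^k := \tilde x^k + \sigma B_1 y^k$ for Algorithm~\ref{alg:HPR-OP}. The key inductive claim is $\tilde\phi^k = \phi^k$ for every $k\ge 0$, which immediately gives $(y^{k+1},z^{k+1},x^{k+\frac12}) = (\by^{k+1},\bz^{k+1},\bx^{k+1})$. The base case is immediate from the initialization. For the inductive step, assume $\tilde\phi^k = \phi^k$; then the Step~1 optimality condition in either algorithm reads
\[
0 \in \partial f_2(z) + B_2^*\phi^k + \sigma B_2^*B_2\, z - \sigma B_2^* c,
\]
and strong monotonicity of $\partial f_2(\cdot)+\sigma B_2^*B_2$ yields a unique root, forcing $z^{k+1} = \bz^{k+1}$. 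Step~2 then gives $x^{k+\frac12} = \bx^{k+1}$, and Step~3 is the same $y$-minimization problem in both algorithms with identical data $(\bz^{k+1},\bx^{k+1})$, so $y^{k+1} = \by^{k+1}$ by the same uniqueness argument using maximality of $\partial f_1(\cdot)+\sigma B_1^*B_1$.

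The delicate part, and the main obstacle, is verifying $\tilde\phi^{k+1} = \phi^{k+1}$. On the Halpern-pADMM side, applying Step~5 of Algorithm~\ref{alg:acc-GADMM} componentwise to the reflection $\hw^{k+1} = 2\bw^{k+1} - w^k$ yields
\[
\phi^{k+1} = \frac{1}{k+2}\phi^0 + \frac{k+1}{k+2}\bigl(2\bar\phi^{k+1} - \phi^k\bigr), \qquad \bar\phi^{k+1} := \bx^{k+1} + \sigma B_1 \by^{k+1}.
\]
On the HPR side, adding $\sigma B_1 y^{k+1}$ to the expression for $\tilde x^{k+1}$ in Step~5 of Algorithm~\ref{alg:HPR-OP} and regrouping the correction $\tfrac{\sigma}{k+2}[B_1 y^0 - B_1 y^{k+1}]$ produces
\[
\tilde\phi^{k+1} = \frac{1}{k+2}\tilde\phi^0 + \frac{k+1}{k+2}\bigl(x^{k+1} + \sigma B_1 y^{k+1}\bigr).
\]
The induction then reduces to the algebraic identity $x^{k+1} + \sigma B_1 y^{k+1} = 2\bar\phi^{k+1} - \phi^k$, which follows by substituting HPR's Step~4, $x^{k+1} = x^{k+\frac12} + \sigma(B_1 y^{k+1} + B_2 z^{k+1} - c)$, and then eliminating $\sigma(B_2\bz^{k+1}-c) = \bx^{k+1} - x^k - \sigma B_1 y^k$ via Halpern's Step~2. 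The main subtlety is recognizing that the seemingly ad hoc correction $\tfrac{\sigma}{k+2}[B_1 y^0 - B_1 y^{k+1}]$ in HPR's Step~5 is engineered precisely to shift the Halpern averaging from $\tilde x$ onto the composite $\tilde x + \sigma B_1 y$, and it is this shift that propagates the identification across iterations.
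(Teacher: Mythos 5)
Your proof is correct and follows essentially the same route as the paper's: an induction in which the only nontrivial step is matching the two $z$-subproblem optimality conditions, which reduces to showing $\tilde x^{k+1}+\sigma B_1 y^{k+1}_{\mathrm{Alg.}\,\ref{alg:HPR-OP}}=x^{k+1}+\sigma B_1 y^{k+1}_{\mathrm{Alg.}\,\ref{alg:acc-GADMM}}$ --- precisely your invariant $\tilde\phi^{k+1}=\phi^{k+1}$, which the paper verifies inline by substituting the Step~5 formulas rather than naming it. Your packaging of this quantity as a composite variable is a clean reorganization of the same algebra (just make explicit that $\tilde\phi^k$ uses Algorithm~\ref{alg:HPR-OP}'s minimizer $y^k$ while $\phi^k$ uses Algorithm~\ref{alg:acc-GADMM}'s Halpern-averaged $y^k$), and your observation that $z^0$ is inert is also correct.
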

\begin{proof}
Based on the assumptions that \(\partial f_1(\cdot) + \sigma B_{1}^{*}B_{1}\) and \(\partial f_2(\cdot) + \sigma B_{2}^{*}B_{2}\) are both maximal and strongly monotone, we can conclude from \cite[Corollary 11.17]{bauschke2011convex} that the solution to each subproblem in both algorithms exists and is unique. We then prove the statement in the proposition using the method of induction. For \(k=0\), due to the same initial point \((y^{0}, x^{0})\), it is clear that:
\begin{equation}\label{eq:acc-yzx1}
  z^{1} = \bz^{1}, \quad x^{\frac{1}{2}} = \bx^{1}, \quad \text{and} \quad y^{1} = \by^{1}.
\end{equation}
Moreover, according to Steps 2, 4, and 5 in Algorithm \ref{alg:HPR-OP}, we know that \(\Tilde{x}^{1}\) satisfies:
\begin{equation}\label{eq:Tildex^1}
 \begin{array}{ll}
       \Tilde{x}^{1} &= \frac{1}{2}(\Tilde{x}^{0} + x^{1}) + \frac{\sigma}{2} B_1(y^0 - y^1) \\
       &= \frac{1}{2}(\Tilde{x}^{0} + x^{\frac{1}{2}} + \sigma(B_1 y^1 + B_2 z^1 - c)) + \frac{\sigma}{2} B_1(y^0 - y^1) \\
       &= \frac{1}{2}(x^{\frac{1}{2}} + \Tilde{x}^{0} + \sigma(B_1 y^0 + B_2 z^1 - c)) \\
       &= x^{\frac{1}{2}}.
 \end{array}
\end{equation}
It follows from Step 1 in Algorithm \ref{alg:HPR-OP} that \(z^{2}\) satisfies the following optimality condition:
\begin{equation}\label{eq:acc-y2-opt}
 \begin{array}{ll}
      & 0 \in \partial f_{2}(z^2) + B_{2}^{*} \Tilde{x}^{1} + \sigma B_{2}^{*}(B_{1} y^{1} + B_{2} z^{2} - c) \\
    \iff & 0 \in \partial f_{2}(z^2) + B_{2}^{*}(x^{\frac{1}{2}} + \sigma(B_{1} y^{1} + B_{2} z^{2} - c)).
 \end{array}
\end{equation}
On the other hand, \(\bz^{2}\) obtained by Step 1 in Algorithm \ref{alg:acc-GADMM} should satisfy the following optimality condition:
\begin{equation}\label{eq:bz2}
 \begin{array}{ll}
       0 \in \partial f_{2}(\bz^2) + B_{2}^{*}(x^1 + \sigma (B_{1} y^{1} + B_{2} \bz^{2} - c)).
 \end{array}
\end{equation}
Substituting
\begin{equation}\label{eq:x1&y1}
   x^{1} = \frac{1}{2} x^0 + \frac{1}{2}(2 \bx^1 - x^0) \quad \text{and} \quad y^{1} = \frac{1}{2} y^0 + \frac{1}{2}(2 \by^1 - y^0)
\end{equation}
from Steps 4 and 5 in Algorithm \ref{alg:acc-GADMM} into \eqref{eq:bz2}, we obtain:
\begin{equation*}
 \begin{array}{ll}
       0 \in \partial f_{2}(\bz^2) + B_{2}^{*}(\bx^1 + \sigma (B_{1} \by^{1} + B_{2} \bz^2 - c)).
 \end{array}
\end{equation*}
This, together with \eqref{eq:acc-yzx1} and \eqref{eq:acc-y2-opt}, implies:
\begin{equation}\label{eq:acc-z2}
z^2 = \bz^2.
\end{equation}
Furthermore, according to Step 2 in Algorithm \ref{alg:HPR-OP} and \eqref{eq:Tildex^1}, we know that:
\begin{equation}\label{eq:acc-x3/2-opt}
x^{\frac{3}{2}} = \Tilde{x}^{1} + \sigma (B_{1} y^{1} + B_{2} z^{2} - c) = x^{\frac{1}{2}} + \sigma (B_{1} y^{1} + B_{2} z^{2} - c).
\end{equation}
Also, from Step 2 in Algorithm \ref{alg:acc-GADMM} and \eqref{eq:x1&y1}, we know that \(\bx^{2}\) satisfies:
\begin{equation*}
\begin{array}{ll}
\bx^{2} &= x^1 + \sigma (B_{1} y^{1} + B_{2} \bz^{2} - c) \\
     &= \bx^1 + \sigma (B_1 \by^1 + B_2 \bz^2 - c).
\end{array}
\end{equation*}
It follows from \eqref{eq:acc-yzx1}, \eqref{eq:acc-z2}, and \eqref{eq:acc-x3/2-opt} that:
\begin{equation}\label{eq:acc-x3/2}
x^{\frac{3}{2}} = \bx^{2},
\end{equation}
which, together with \eqref{eq:acc-z2}, implies:
\begin{equation}\label{eq:acc-y2}
y^{2} = \by^{2}
\end{equation}
by comparing Step 3 in Algorithms \ref{alg:HPR-OP} and \ref{alg:acc-GADMM}. Hence, the statement is true for \(k=1\).

Now, assume that the statement holds for some $k\geq 1$. Then, by the assumption of induction, we have
\begin{equation}\label{eq:acc-yzxk+1}
y^{k+1}=\by^{k+1}, \quad z^{k+1}=\bz^{k+1}, \text{ and } x^{k+\frac{1}{2}}=\bx^{k+1}.
\end{equation}
According to Step 5 in Algorithm \ref{alg:HPR-OP} and \eqref{eq:acc-yzxk+1}, we know that $\Tilde{x}^{k+1}$ satisfies
\begin{equation}\label{eq:tildexk+1}
 \begin{array}{ll}
       \Tilde{x}^{k+1}&= \left(\frac{1}{k+2} \Tilde{x}^{0} + \frac{k+1}{k+2} x^{k+1}\right) + \frac{\sigma}{k+2}\left[B_1{y}^{0}    -B_{1}{y}^{k+1}\right]\\
       &= \frac{1}{k+2} \Tilde{x}^{0} + \frac{k+1}{k+2} (x^{k+1/2}+\sigma(B_1y^{k+1}+B_{2}z^{k+1}-c))  + \frac{\sigma}{k+2} B_1(y^0-y^{k+1})\\
      &= \frac{1}{k+2}x^0+\frac{k+1}{k+2}(\bx^{k+1}+\sigma(B_1 \by^{k+1}+B_2 \bz^{k+1}-c))+\frac{\sigma}{k+2}B_1(y^0-\by^{k+1})
       .\\
 \end{array}
\end{equation}
In addition, according to Step 1 in Algorithm \ref{alg:HPR-OP}, we know that $z^{k+2}$ satisfies
\begin{equation}\label{eq:acc-yk+2-opt}
 \begin{array}{ll}
& 0\in \partial f_{2}(z^{k+2})+ B_{2}^{*} \Tilde{x}^{k+1}   +\sigma B_{2}^{*}( B_{1}{y}^{k+1}+B_{2}z^{k+2}-c).
  \end{array}
\end{equation}
On the other hand, from Steps 2 and 5 in Algorithm \ref{alg:acc-GADMM}, we know that $x^{k+1}$ satisfies
\begin{equation}\label{eq:xk+1}
 \begin{array}{ll}
       x^{k+1}&= \frac{1}{k+2} x^{0} + \frac{k+1}{k+2} (2\bx^{k+1}-x^{k})\\
       &=\frac{1}{k+2} x^{0} + \frac{k+1}{k+2} (\bx^{k+1}+ \sigma (B_1 y^k+B_2\bz^{k+1}-c))
 \end{array}
\end{equation}
and
$y^{k+1}$ satisfies
\begin{equation}\label{eq:yk+1}
 \begin{array}{ll}
       y^{k+1}= \frac{1}{k+2} y^{0} + \frac{k+1}{k+2} (2\by^{k+1}-y^{k}).
 \end{array}
\end{equation}
In addition, from Step 1 in Algorithm \ref{alg:acc-GADMM}, we have
\begin{equation}\label{eq:bz^k+2}
    0\in \partial f_{2}(\bz^{k+2})+ B_{2}^{*}({x}^{k+1} +\sigma \left(B_{1}{y}^{k+1}+B_{2}\bz^{k+2}-c)\right).
\end{equation}
Substituting \eqref{eq:xk+1} and  \eqref{eq:yk+1} into \eqref{eq:bz^k+2}, we can obtain
$$
 \begin{array}{ll}
 0&\in \partial f_{2}(\bz^{k+2})+ B_{2}^{*}\left(\frac{1}{k+2}x^0+\frac{k+1}{k+2}(\bx^{k+1}+\sigma(B_1 \by^{k+1}+B_2 \bz^{k+1}-c))\right)\\
 & \quad +  B_{2}^{*}\left( \frac{\sigma}{k+2}B_1(y^0-\by^{k+1})+\sigma(B_1 \by^{k+1}+B_2\bz^{k+2}-c)
      \right),\\
 \end{array}
 $$
which together with \eqref{eq:tildexk+1} implies
 $$
 \begin{array}{ll}
 0\in \partial f_{2}(\bz^{k+2})+ B_{2}^{*}\left( \Tilde{x}^{k+1}+\sigma(B_1 \by^{k+1}+B_2\bz^{k+2}-c)
      \right).
 \end{array}
 $$
It follows from \eqref{eq:acc-yzxk+1} and \eqref{eq:acc-yk+2-opt} that
\begin{equation}\label{eq:acc-zk+2}
 \begin{array}{ll}
       z^{k+2}= \bz^{k+2}.
 \end{array}
\end{equation}
Moreover, according to Step 2 in Algorithm \ref{alg:HPR-OP}, we know that $x^{k+\frac{3}{2}}$ satisfies
\begin{equation}\label{eq:acc-xk+3/2-opt}
x^{k+\frac{3}{2}}= \Tilde{x}^{k+1}+\sigma (B_{1}{y}^{k+1}+B_{2}z^{k+2}-c).
\end{equation}
Also, from Step 2 in Algorithm \ref{alg:acc-GADMM}, \eqref{eq:tildexk+1}, \eqref{eq:xk+1} and  \eqref{eq:yk+1}, we know that $\bx^{k+2}$ satisfies
\begin{equation*}
\begin{array}{ll}
\bx^{k+2}&={x}^{k+1}+\sigma (B_{1}{y}^{k+1}+B_{2}\bz^{k+2}-c)\\
         &= \Tilde{x}^{k+1}+\sigma(B_1 \by^{k+1}+B_2\bz^{k+2}-c).
\end{array}
\end{equation*}
It follows from \eqref{eq:acc-yzxk+1}, \eqref{eq:acc-zk+2}, and \eqref{eq:acc-xk+3/2-opt} that
\begin{equation}\label{eq:acc-xk+3/2}
x^{k+\frac{3}{2}}=\bx^{k+2}.
\end{equation}
Since $(\bz^{k+2},\bx^{k+2})=(z^{k+2},x^{k+\frac{3}{2}})$, then we have
\begin{equation}\label{eq:acc-yk+2}
y^{k+2}=\by^{k+2}
\end{equation}
by comparing Step 3 in Algorithms \ref{alg:HPR-OP} and \ref{alg:acc-GADMM}. Hence, from \eqref{eq:acc-zk+2}, \eqref{eq:acc-xk+3/2}, and \eqref{eq:acc-yk+2}, we know that the statement also holds for $k+1$. Thus, by induction, we have completed the proof.
\qed
\end{proof}

%

\bibliography{REF.bib}

\bibliographystyle{spmpsci}      

\section*{Statements and Declarations}

\textbf{Funding} \thanks{The work of Defeng Sun was supported by the Research Center for Intelligent Operations Research, RGC Senior Research Fellow Scheme No. SRFS2223-5S02, and  GRF Project No. 15304721. The work of Yancheng Yuan was supported by the Research Center for Intelligent Operations Research and The Hong Kong Polytechnic University under grant P0045485. The work of Xinyuan Zhao was supported in part by the National Natural Science Foundation of China under Project No. 12271015.}

~\\
\noindent\textbf{Conflict of interest}   The authors declare that they have no conflict of interest.

~\\
\noindent\textbf{Data Availability} The datasets analyzed during the current study are available from the corresponding author on reasonable request.



\end{document}